\date{May 2, 2012}
\newtheorem{theorem}{Theorem}[section]
\newtheorem{lemma}[theorem]{Lemma}
\newtheorem{proposition}[theorem]{Proposition}
\newtheorem{conjecture}[theorem]{Conjecture}
\newtheorem{example}[theorem]{Example}
\theoremstyle{remark}
\newtheorem{remark}[theorem]{Remark}
\newcommand{\la}{\langle}
\newcommand{\ra}{\rangle}
\newcommand{\too}{\longrightarrow}
\newcommand{\surj}{\twoheadrightarrow}
\newcommand{\inc}{\hookrightarrow}
\newcommand{\ox}{\otimes}
\newcommand{\quism}{\stackrel{\sim}{\too}}
\newcommand{\Tor}{\operatorname{Tor}}
\newcommand{\length}{\operatorname{length}}
\newcommand{\cF}{{\mathcal F}}
\newcommand{\cM}{{\mathcal M}}
\newcommand{\QQ}{{\mathbb Q}}
\newcommand{\bk}{{\mathbf{k}}}
\begin{document}

\title[The Hilali Conjeture]{The Hilali Conjecture for hyperelliptic spaces}

\subjclass[2010]{Primary: 55P62. Secondary: 18G15}

\keywords{Rational homotopy, Sullivan models, elliptic spaces, Tor functors.}

\author[J. Fern\'{a}ndez-Bobadilla]{Javier Fern\'{a}ndez de Bobadilla}
\address{Instituto de Ciencias Matem\'aticas
CSIC-UAM-UC3M-UCM, Consejo Superior de Investigaciones Cient\'{\i}ficas,
C/ Nicol\'as Cabrera, n$^o$ 13-15, Campus Cantoblanco UAM, 28049 Madrid, Spain}

\email{javier@icmat.es}

\author[J. Fres\'{a}n]{Javier Fres\'{a}n}
\address{LAGA, UMR 7539, Institut Galil\'{e}e, Universit\'{e} Paris 13, 99, Avenue Jean-Baptiste Cl\'{e}ment, F-93430,
Villetaneuse, France}

\email{fresan@math.univ-paris13.fr}

\author[V. Mu\~{n}oz]{Vicente Mu\~{n}oz}
\address{Facultad de Ciencias
Matem\'aticas, Universidad Complutense de Madrid, Plaza de Ciencias
3, 28040 Madrid, Spain}

\email{vicente.munoz@mat.ucm.es}

\author[A. Murillo]{Aniceto Murillo}
\address{Departamento de \'Algebra, Geometr\'{\i}a y Topolog\'{\i}a, Universidad de M\'alaga, Ap. 59, 29080 M\'alaga, Spain}
\email{aniceto@uma.es}

\thanks{Fourth author partially supported through Spanish MEC grant MTM2010-17389.}

\maketitle

\begin{abstract}
Hilali Conjecture predicts that for a simply-connected elliptic space,
the total dimension
of the rational homotopy does not exceed that of the rational homology.
Here we give a proof of this conjecture for a class of elliptic spaces
known as hyperelliptic.
\end{abstract}

\section{Introduction} \label{sec:introduction}

Let $X$ be a simply-connected CW-complex. Then $X$ is said to be of elliptic
type if both $\dim H^*(X,\QQ)<\infty$ and $\dim \pi_*(X)\ox \QQ <\infty$.
For these spaces, Hilali conjetured in \cite{thesis} the following:

\begin{conjecture}\label{conj:Hilali}
  If $X$ is a simply-connected CW-complex of elliptic type, then
   $$
   \dim \pi_*(X)\ox \QQ \leq \dim H^*(X,\QQ) \, .
   $$
\end{conjecture}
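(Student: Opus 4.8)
The plan is to translate the conjecture into minimal Sullivan model terms and then attack the resulting algebraic inequality by induction along a Koszul--Sullivan filtration, isolating the genuinely non-pure behaviour as the hard case. First I would replace $X$ by its minimal Sullivan model $(\Lambda V, d)$. Since $X$ is simply connected of elliptic type, this model exists with $\dim V < \infty$, $\dim H^*(\Lambda V, d) = \dim H^*(X,\QQ) < \infty$, and $\dim \pi_*(X)\otimes\QQ = \dim V$. Thus the conjecture is equivalent to
\[
\dim V = \dim V^{\mathrm{even}} + \dim V^{\mathrm{odd}} \le \dim H^*(\Lambda V, d).
\]
I would then record the structural constraints available for any elliptic model: the homotopy Euler characteristic satisfies $\chi_\pi = \dim V^{\mathrm{even}} - \dim V^{\mathrm{odd}} \le 0$, the Euler characteristic satisfies $\chi \ge 0$, and $H^*(\Lambda V, d)$ is a Poincar\'e duality algebra of some formal dimension $n$.

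Next I would compare $(\Lambda V, d)$ with its associated pure model. Writing $d = d_\sigma + d_+$, where $d_\sigma$ is the part of the differential that lowers odd word-length, the algebra $(\Lambda V, d_\sigma)$ is pure and elliptic on the same space $V$, and in particular hyperelliptic. The inequality $\dim V \le \dim H^*(\Lambda V, d_\sigma)$ for this pure model is exactly the content established in this paper, so it would serve as the base of the argument. The obvious bridge is the odd spectral sequence $E_1 = H^*(\Lambda V, d_\sigma) \Rightarrow H^*(\Lambda V, d)$.

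Here lies the main obstacle: this spectral sequence runs in the unfavourable direction, yielding only $\dim H^*(\Lambda V, d) \le \dim H^*(\Lambda V, d_\sigma)$, so the hyperelliptic result does not transfer to the general case by a formal comparison. To recover a lower bound I would instead induct on the number of even generators $x$ with $dx \ne 0$, that is, precisely the generators responsible for the failure of purity. Adjoining such a generator realizes $(\Lambda V, d)$ as a relative Koszul--Sullivan extension of a smaller elliptic model, and I would analyze the induced long exact sequence in cohomology, using Poincar\'e duality together with $\chi_\pi \le 0$ to argue that each such generator either contributes a new cohomology class or is compensated by a surviving product, so that $\dim H^*$ never falls below $\dim V$. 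The crux, and the reason the general statement is strictly harder than the hyperelliptic case treated here, is exactly the control of these even generators of nonzero differential: their differentials couple $V^{\mathrm{even}}$ and $V^{\mathrm{odd}}$ and can cancel cohomology, and ruling out a net loss of total cohomological dimension below $\dim V$ is the step I expect to resist a uniform treatment.
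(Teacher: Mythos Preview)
The statement you are asked to prove is Conjecture~\ref{conj:Hilali} in full generality, but the paper does \emph{not} contain a proof of it: it is stated as an open conjecture, and the paper's main theorem (Theorem~\ref{thm:main}) establishes it only for \emph{hyperelliptic} spaces, i.e.\ those with $d(V^{\mathrm{even}})=0$ and $d(V^{\mathrm{odd}})\subset \Lambda^+V^{\mathrm{even}}\otimes\Lambda V^{\mathrm{odd}}$. So there is no ``paper's own proof'' of the general statement to compare against; what the paper actually proves is precisely the case you invoke as the base of your induction.

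Your proposal, therefore, is an attempt to go strictly beyond the paper, and as written it is not a proof but a strategy outline with a genuine gap at the decisive point. You correctly observe that the odd spectral sequence $E_1=H^*(\Lambda V,d_\sigma)\Rightarrow H^*(\Lambda V,d)$ gives only the wrong inequality. Your proposed remedy is an induction on the number of even generators $x$ with $dx\neq 0$, but this step is never carried out: you do not specify how removing such a generator yields a Koszul--Sullivan extension with an \emph{elliptic} base (in general it will not, since other differentials may involve $x$ and the quotient need not have finite-dimensional cohomology), nor how the long exact sequence combined with Poincar\'e duality and $\chi_\pi\le 0$ forces the claimed lower bound. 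You yourself flag this as ``the step I expect to resist a uniform treatment.'' That expectation is correct: controlling the cancellation produced by even generators with nonzero differential is exactly why the general Hilali conjecture remains open, and the paper makes no claim to handle it.
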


By the theory of minimal models
of Sullivan \cite{GM}, the rational homotopy type of $X$ is encoded in a differential
algebra $(A,d)$ called {\em the minimal model} of $X$. This is a free graded algebra
$A=\Lambda V$, generated by a graded vector space $V=\bigoplus_{k\geq 2} V^k$, and
with decomposable differential, i.e., $d:V^k \to (\Lambda^{\geq 2} V)^{k+1}$. It
satisfies that:
  \begin{eqnarray*}
   V^k &=& (\pi_k(X)\ox \QQ)^* \, ,\\
   H^k(\Lambda V, d) & = & H^k(X,\QQ) \, .
  \end{eqnarray*}

Therefore the Hilali conjecture can be rewritten as follows: for a
finite-dimensional graded vector space
$V$ (in degrees bigger or equal than two), we have
 $$
 \dim V \leq \dim H^*(\Lambda V,d)
 $$
for any decomposable differential $d$ on $\Lambda V$.

\medskip

An elliptic space $X$ is called of pure type if its minimal model $(\Lambda V,d)$ satisfies that
$V=V^{even}\oplus V^{odd}$, $d(V^{even})=0$ and $d(V^{odd}) \subset \Lambda V^{even}$. Also
$X$ is called hyperelliptic if $d(V^{even})=0$ and $d(V^{odd}) \subset \Lambda^+ V^{even}\otimes
\Lambda V^{odd}$.

In his thesis \cite{thesis} in 1990, Hilali proved Conjecture \ref{conj:Hilali} for elliptic spaces
of pure type. The conjecture is known to hold \cite{Hi2,Hi1} also in several cases: H-spaces,
nilmanifolds, symplectic and cosymplectic manifolds, coformal spaces with only odd-degree generators,
and formal spaces. Hilali and Mamouni \cite{Hi2,Hi1} have also proved Conjecture \ref{conj:Hilali}
for hyperelliptic spaces under various conditions in the homotopical and homological Euler characteristics.

The main result of this paper is the following:

\begin{theorem} \label{thm:main}
 Conjecture \ref{conj:Hilali} holds for hyperelliptic spaces.
\end{theorem}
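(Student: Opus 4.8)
The strategy combines the structural description of a hyperelliptic model as a relative Sullivan algebra over its even part, a resulting $\operatorname{Tor}$-formula for the rational cohomology, and the rigidity of the cohomology of elliptic spaces --- Poincar\'e duality and the positive Euler characteristic dichotomy --- falling back on Hilali's theorem in the pure case. Write $V^{even}=\la x_1,\dots,x_n\ra$, $V^{odd}=\la y_1,\dots,y_m\ra$ and $P=\Lambda V^{even}$, a polynomial algebra, so that $\dim V=n+m$. The statement is vacuous when $V=0$. If $n=0$ then $\Lambda^+V^{even}=0$ forces $d=0$, so $H^*(\Lambda V,d)=\Lambda V^{odd}$ has dimension $2^m\ge m$. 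Assume henceforth $n\ge 1$; since $X$ is elliptic, $\chi_\pi(X)=n-m\le 0$, hence $m\ge n$. If $m=n$ then $\chi(X)>0$, so $X$ is an $F_0$-space, admits a pure minimal model, and the conclusion is Hilali's theorem for pure spaces \cite{thesis}. We are reduced to $m>n\ge 1$.

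Ordering the $y_j$ by degree, hyperellipticity says precisely that $(P,0)\hookrightarrow(\Lambda V,d)$ is a relative Sullivan algebra whose cofibre $\QQ\ox_P(\Lambda V,d)=(\Lambda V^{odd},0)$ has trivial differential, because $d(V^{odd})\subseteq\Lambda^+V^{even}\ox\Lambda V^{odd}$ lies in the augmentation ideal of $P$. Geometrically this is a fibration $F\to X\to B$ with $B$ rationally a product of even Eilenberg--MacLane spaces (hence simply connected, with $H^*(B;\QQ)=P$) and with $H^*(F;\QQ)=\Lambda V^{odd}$. Writing $M=H^*(X;\QQ)$, the Eilenberg--Moore spectral sequence of this fibration takes the form
\[
E_2=\operatorname{Tor}_{P}(\QQ,M)\Longrightarrow H^*(F;\QQ)=\Lambda V^{odd},
\]
so that $\dim_\QQ\operatorname{Tor}_{P}(\QQ,M)\ge 2^m$. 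Equivalently, viewing $x_1,\dots,x_n$ as elements of $M$ through the ring map $P\to M$, the Koszul complex of $(x_1,\dots,x_n)$ over $M$ has total homology of dimension at least $2^m$.

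The remaining and main task is to deduce $\dim M\ge n+m$ from this. Two general facts are available: $M$, being the rational cohomology of an elliptic space, is an Artinian Poincar\'e duality (equivalently Gorenstein) algebra by Halperin's theorem; and $\ker(P\to M)\subseteq\mathfrak m_P^2$, since an exact element of a minimal model has vanishing linear part, so in particular $1,x_1,\dots,x_n$ are linearly independent in $M$ and $\dim M\ge n+1$. A d\'evissage of $M$ along a composition series with all factors $\QQ$, compared with $\operatorname{Tor}_P(\QQ,\QQ)=\Lambda(\bar x_1,\dots,\bar x_n)$, already gives the crude estimate $\dim M\ge 2^{m-n}$, which is enough when $m-n$ is large; the delicate regime is $m$ only slightly bigger than $n$. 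There I would analyse the minimal $P$-free resolution of $M$ --- of length exactly $n$, as $M$ has finite length --- exploiting the self-duality of its Betti numbers $\beta_i=\dim\operatorname{Tor}^P_i(M,\QQ)$ forced by the Poincar\'e duality of $M$, together with the identity $\sum_i(-1)^i\beta_i=0$ valid for $n\ge 1$, to bound $\sum_i\beta_i$ by an efficient multiple of $\dim M$. In parallel I would attempt an induction on $m$: peel off a well-chosen odd generator $y_m$ (for instance one with $dy_m=0$, or one with nonzero transgression in $H^*(B;\QQ)$), realise the deletion as a two-stage fibration, relate the cohomology and $\operatorname{Tor}$-groups of the sub-model to those of $(\Lambda V,d)$, and reduce to the pure case.

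The principal obstacle is exactly this last step: converting $\dim\operatorname{Tor}_P(\QQ,M)\ge 2^m$ into $\dim M\ge n+m$ in the range $m\approx n$, where the bare exponential bound is too weak and one must genuinely use the extra rigidity of ellipticity --- Poincar\'e duality of $M$, decomposability of the differential, and the fine structure of minimal free resolutions of Artinian Gorenstein quotients of polynomial rings --- or else carry out a clean inductive reduction to Hilali's theorem in the pure case.
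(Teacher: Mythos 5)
Your reductions at the outset are sound and match the paper: the case $n=0$ is trivial, the case $m=n$ (i.e.\ $\chi_\pi=0$) reduces to the pure situation, and your Eilenberg--Moore argument giving $\dim\operatorname{Tor}_P(\QQ,M)\ge 2^m$, combined with the d\'evissage bound $\dim\operatorname{Tor}_P(\QQ,M)\le 2^n\dim M$, yields $\dim M\ge 2^{m-n}=2^r$. This is exactly inequality (\ref{eqn:2r}) of the paper, and your derivation is essentially the paper's second proof of it (the spectral sequence $E_2=H(B)\otimes\operatorname{Tor}_A(\QQ,C)\Rightarrow\operatorname{Tor}_A(B,C)$ applied to $\QQ\leftarrow\Lambda V^{even}\hookrightarrow\Lambda V$); the paper's first proof gets the same bound by perturbing the differential ($d_t=d+t\delta$ with $\delta\bar y_1=x_1$) and a semicontinuity lemma for $\dim H$.

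However, what you candidly label ``the principal obstacle'' is in fact the main content of the theorem, and your proposal does not close it: when $r=m-n$ is small relative to $n$, the bound $2^r$ falls far short of the target $n+m=2n+r$, and the strategies you list (self-duality of Betti numbers of the Gorenstein algebra $M$, an induction peeling off odd generators) are only stated as intentions, with no argument that they succeed. The paper closes this gap by a quite different, more elementary route: since $r>0$ forces $\chi=0$, it suffices to prove $\dim H^{even}\ge n+\tfrac r2$; the constant, the $n$ linear classes $x_i$, and at least $\binom{n+1}{2}-(n+r)$ quadratic classes (only $d_o$ of an odd generator can kill a quadratic, by decomposability) survive in $H^{even}$; combining the failure of this count with the failure of $2^r\ge 2n+r$ forces $2^r-r\le\sqrt{12r-11}$, hence $n=r=3$; and that single residual case is settled by an explicit normalisation of the differential ($dy_1=x_1^2,\dots,dy_6=x_3^2+\cdots$) exhibiting two extra even cocycles $\alpha_1,\alpha_2$ of which at most one can die. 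None of these steps --- the use of $\chi=0$, the quadratic count, the numerical reduction to $n=r=3$, and the terminal computation --- appears in your proposal, so as it stands the proof is genuinely incomplete precisely in the regime you identify.
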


We shall start by proving it for elliptic spaces of pure type in
section \ref{sec:proof}. This requires to reduce the question to a problem
about Tor functors of certain modules of finite length over a polynomial
ring. We solve it by using a semicontinuity result for the Tor functor.
Then in section \ref{sec:proof2} we prove theorem \ref{thm:main}
for hyperelliptic spaces. For this we have to prove a semicontinuity result
for the homology of elliptic spaces, and apply it to reduce the general
case to the case in which the minimal model only has generators of odd degree
and zero differential. We give two different proofs of an inequality from
which the result follows.

\section{Minimal models}\label{sec:minimal-models}

We recall some definitions and results about minimal models
\cite{FHT}. Let $(A,d )$ be a {\it differential algebra}, that
is, $A$ is a (positively) graded commutative algebra over the rational numbers,
with a differential $d $ which is a derivation, i.e., $d (a\cdot b)
= (d  a)\cdot b +(-1)^{\deg (a)} a\cdot (d  b)$, where $\deg(a)$
is the degree of $a$. We say that
$A$ is connected if $A^0= \QQ$, and simply-connected if moreover $A^1=0$.

A simply-connected differential algebra $(A,d )$ is said to be {\it minimal\/} if:
\begin{enumerate}
 \item $A$ is free as an algebra, that is, $A$ is the free
 algebra $\Lambda V$ over a graded vector space $V=\oplus_{k\geq2} V^k$, and
 \item For $x\in V^k$, $dx \in (\Lambda V)^{k+1}$ has no linear term, i.e., it
 lives in ${\Lambda V}^{>0} \cdot {\Lambda V}^{>0} \subset {\Lambda V}$.
\end{enumerate}

Let $(A,d)$ be a simply-connected differential algebra. A minimal model for $(A,d)$ is
a minimal algebra $(\Lambda V,d)$ together with a quasi-isomorphism
$\rho: (\Lambda V,d)\to (A,d)$ (that is, a map of differential algebras such that
$\rho_*: H^*(\Lambda V,d)\to H^*(A,d)$ is an isomorphism). A minimal model for $(A,d)$
exists and it is unique up to isomorphism.

Now consider a simply-connected CW-complex $X$. There is an algebra of piecewise polynomial
rational differential forms $(\Omega^*_{PL}(X),d)$ defined in \cite[Chap. VIII]{GM}. A minimal model of $X$ is a minimal model $(\Lambda V_X,d)$ for $(\Omega^*_{PL}(X),d)$. We have that
  \begin{eqnarray*}
   V^k &=& (\pi_k(X)\ox \QQ)^* \, ,\\
   H^k(\Lambda V, d) & = & H^k(X,\QQ) \, .
  \end{eqnarray*}

A space $X$ is elliptic \cite{Felix} if both $\sum \dim \pi_k(X)\ox \QQ<\infty$ and $\sum \dim H^k (X, \QQ)<\infty$. Equivalently, if
$(\Lambda V,d)$ is the minimal model, we require that both
$V$ and $H^*(\Lambda V,d)$ are finite dimensional. For elliptic spaces, the Euler-Poincar\'{e} and the homotopic characteristics are well defined:
\begin{align*}
\chi=&\sum_{i\geq 0} (-1)^i \dim H^i(\Lambda V,\QQ), \\ \chi_\pi=&\sum_{i\geq 0} (-1)^i \dim \pi_i(X)\otimes \QQ=\dim V^{even}-\dim V^{odd}.
\end{align*}

We refer the reader to \cite[Thm. 32.10]{FHT} for the proof of the following:

\begin{proposition}\label{prop:mod}
Let $(\Lambda V, d)$ be an elliptic minimal model. Then
$\chi\geq 0$ and $\chi_\pi\leq 0.$ Moreover, $\chi_\pi<0$ if and only if $\chi=0$.
\end{proposition}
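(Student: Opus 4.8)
The plan is to reduce the statement to the case of \emph{pure} Sullivan algebras, and then to a computation with Koszul complexes, following \cite[\S32]{FHT}. Writing $V=V^{even}\oplus V^{odd}$, one attaches to $(\Lambda V,d)$ the associated pure model $(\Lambda V,d_\sigma)$, where $d_\sigma=0$ on $\Lambda V^{even}$ and, for $y\in V^{odd}$, $d_\sigma y$ is the component of $dy$ lying in $\Lambda V^{even}$ (one checks $d_\sigma^2=0$). Since $\chi_\pi=\dim V^{even}-\dim V^{odd}$ depends only on $V$, it is the same for $d$ and for $d_\sigma$. The crux is that $(\Lambda V,d_\sigma)$ is again elliptic and that $\chi(\Lambda V,d)=\chi(\Lambda V,d_\sigma)$; this comes from the ``odd spectral sequence'', the spectral sequence of a suitable filtration of $(\Lambda V,d)$ --- bounded, since $V$ is finite dimensional --- whose first page is governed by $d_\sigma$, converging from $H^*(\Lambda V,d_\sigma)$ to $H^*(\Lambda V,d)$: convergence forces the Euler characteristics to coincide, and a dimension count along the pages carries over finiteness of the cohomology. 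I expect this transfer to be the main obstacle, particularly the verification that ellipticity really descends to $(\Lambda V,d_\sigma)$ --- the step where the hypothesis is genuinely used (see \cite[\S32]{FHT}). It then suffices to treat the pure case, so from now on assume $d=d_\sigma$.

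Next I set up the pure case. Put $n=\dim V^{even}$, $m=\dim V^{odd}$, choose bases $x_1,\dots,x_n$ of $V^{even}$ and $y_1,\dots,y_m$ of $V^{odd}$, and let $R=\QQ[x_1,\dots,x_n]=\Lambda V^{even}$ and $a_j=d_\sigma y_j\in R$. Then $(\Lambda V,d_\sigma)$ is exactly the Koszul complex
\[
0\to R\otimes\Lambda^m\to\cdots\to R\otimes\Lambda^1\to R\to 0
\]
on the sequence $(a_1,\dots,a_m)$, where $\Lambda^p:=\Lambda^p(y_1,\dots,y_m)$; its homology is the family of Koszul homology modules $H_0,\dots,H_m$, and, keeping internal degrees, $H^*(\Lambda V,d_\sigma)=\bigoplus_{p=0}^m H_p$ as graded vector spaces, with $H_0=R/(a_1,\dots,a_m)$. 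Since $\dim H_0\le\dim H^*<\infty$, the ideal $(a_1,\dots,a_m)$ has radical $(x_1,\dots,x_n)$, hence height $n$; being generated by $m$ elements, it has height at most $m$ by Krull's theorem, so $n\le m$, i.e.\ $\chi_\pi\le 0$.

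Finally I compute $\chi$. Regarding the Koszul complex as a bounded complex of finitely generated free graded $R$-modules, the Euler--Poincar\'e relation gives
\[
\sum_{p=0}^m(-1)^p\Hilb(R\otimes\Lambda^p,t)=\sum_{p=0}^m(-1)^p\Hilb(H_p,t).
\]
The left-hand side equals $\Hilb(R,t)\prod_{j=1}^m(1-t^{|y_j|})=\prod_{j=1}^m(1-t^{|y_j|})\big/\prod_{i=1}^n(1-t^{|x_i|})$. On the right-hand side, a basis monomial of $R\otimes\Lambda^p$ has degree $\equiv p\pmod 2$ (the even generators contribute evenly and each $y_j$ oddly), so $H_p$ is concentrated in cohomological degrees $\equiv p\pmod 2$; replacing $(-1)^p$ by $(-1)^{\deg}$ accordingly, the right-hand side becomes $\sum_{k\ge 0}(-1)^k\dim H^k(\Lambda V,d_\sigma)\,t^k$, the Euler--Poincar\'e series of the cohomology. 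By ellipticity this series is a polynomial, so $\chi$ is its value at $t=1$. As each $1-t^\ell$ has a simple zero at $t=1$, the displayed rational function has a zero of order $m-n$ there, with value $\prod_j|y_j|\big/\prod_i|x_i|>0$ if $m=n$ and value $0$ if $m>n$. Hence $\chi\ge 0$, and $\chi=0$ precisely when $m>n$, i.e.\ precisely when $\chi_\pi<0$; transporting $\chi$ and $\chi_\pi$ back to $(\Lambda V,d)$ via the first step completes the proof.
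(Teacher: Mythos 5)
The paper does not actually prove this proposition: it cites \cite[Thm.~32.10]{FHT} and moves on. Your argument is essentially a reconstruction of that standard proof --- reduction to the associated pure model $(\Lambda V,d_\sigma)$ via the odd spectral sequence, then a Koszul-complex computation --- and, like the paper, you outsource the genuinely hard ingredient (that ellipticity descends to $(\Lambda V,d_\sigma)$, \cite[Prop.~32.4]{FHT}) to the same reference. The height-theorem argument for $\chi_\pi\le 0$ and the overall architecture are sound.

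There is, however, one step that is false as written: the Euler--Poincar\'e identity for Hilbert series. The Koszul differential does not preserve the internal degree; it maps $(R\otimes\Lambda^p)^k$ to $(R\otimes\Lambda^{p-1})^{k+1}$, since $\deg a_j=|y_j|+1$. Consequently the complex does not split degree by degree into bounded complexes of finite-dimensional spaces, and the identity $\sum_p(-1)^p\Hilb(R\otimes\Lambda^p,t)=\sum_p(-1)^p\Hilb(H_p,t)$ fails. Test it on the model of $S^2$ ($n=m=1$, $|x|=2$, $|y|=3$, $a_1=x^2$): your left-hand side is $(1-t^3)/(1-t^2)$, not even a polynomial, while the right-hand side is $1+t^2$. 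The differential preserves $k+p$, so the correct identity carries a twist, $\sum_p(-t)^p\Hilb(R\otimes\Lambda^p,t)=\sum_p(-t)^p\Hilb(H_p,t)$; equivalently the numerator must be $\prod_j\bigl(1-t^{|y_j|+1}\bigr)$ (the degrees of the $a_j$), not $\prod_j\bigl(1-t^{|y_j|}\bigr)$, and the value at $t=1$ when $m=n$ is $\prod_j(|y_j|+1)\big/\prod_i|x_i|$, not $\prod_j|y_j|\big/\prod_i|x_i|$. Because $k\equiv p\pmod 2$ on $R\otimes\Lambda^p$, evaluating the corrected identity at $t=1$ still returns $\chi$ on the right, the order of vanishing at $t=1$ is still $m-n$, and the limit is still positive when $m=n$; so all your conclusions survive, but the identity you actually wrote down needs this repair before the final paragraph is valid.
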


In his thesis \cite{thesis}, M. Hilali conjectured that for elliptic spaces:
   $$
   \dim \pi_*(X)\ox \QQ \leq \dim H^*(X,\QQ) \, .
   $$
In algebraic terms, this is equivalent to
 $$
 \dim V \leq \dim H^*(\Lambda V,d) \, ,
 $$
whenever $(\Lambda V,d)$ is a minimal model with $\dim V<\infty$. Note that finiteness of
both $\dim H^\ast(X,\QQ)$ and $\dim \pi_\ast(X)\otimes \QQ$ is necessary. Otherwise, one can
easily construct counterexamples such as $X=S^3 \vee S^3$.

\section{Proof of the Hilali conjecture for elliptic spaces of pure type} \label{sec:proof}

A minimal model $(\Lambda V,d)$ is of \emph{pure type} if $V=V^{even}\oplus V^{odd}$,
with
 $$
  d(V^{even})=0, \quad d(V^{odd}) \subset \Lambda V^{even}.
 $$
An elliptic space
is of pure type if its minimal model is so. These spaces are
widely studied in \cite[\textsection 32]{FHT}. By proposition \ref{prop:mod},
we have that $\dim V^{even}-\dim V^{odd} \leq 0$. Let $n=\dim V^{even}$ and
$n+r=\dim V^{odd}$, where $r\geq 0$.
Write $x_1,\ldots, x_n$ for
the generators of even degree, and $y_1,\ldots, y_{n+r}$ for the generators of
odd degree. Then $dx_i=0$, and $dy_j=P_j(x_1,\ldots, x_n)$, where $P_j$ are
polynomials without linear terms.

In this section we prove the following:

\begin{theorem}\label{thm:sec-proof}
 The Hilali conjecture holds for elliptic spaces of pure type.
\end{theorem}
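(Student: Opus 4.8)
The plan is to reduce $(\Lambda V,d)$ to a Koszul complex over a finite-dimensional Gorenstein algebra and then bound its homology. Since $(\Lambda V,d)$ is elliptic and pure, the ideal $(P_1,\dots,P_{n+r})$ is $\mathfrak m$-primary in $A:=\Lambda V^{even}=\QQ[x_1,\dots,x_n]$, where $\mathfrak m=(x_1,\dots,x_n)$. Because $\QQ$ is infinite, $n$ general linear combinations $Q_1,\dots,Q_n$ of the $P_j$ generate an $\mathfrak m$-primary ideal, hence — $A$ being Cohen--Macaulay — form a regular sequence; the corresponding linear combinations $y'_1,\dots,y'_n$ of the $y_j$ satisfy $dy'_a=Q_a$ and span a subspace $W_1\subset V^{odd}$ with $\dim W_1=n$. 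I would then set $B:=H^*(\Lambda(V^{even}\oplus W_1),d)=A/(Q_1,\dots,Q_n)$, a finite-dimensional graded complete intersection, hence Gorenstein. Picking a complement $W_2$ of $W_1$ in $V^{odd}$, with basis $y_1,\dots,y_r$, and letting $b_j\in B$ be the class of $dy_j$ — each $b_j$ comes from $\Lambda^{\ge2}V^{even}$ since $d$ is decomposable — the quasi-isomorphism $\Lambda(V^{even}\oplus W_1)\to(B,0)$ extends, by comparison of the word-length spectral sequences, to a quasi-isomorphism of $(\Lambda V,d)$ with the Koszul complex $(B\ox\Lambda W_2,\bar d)$, $\bar dy_j=b_j$. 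Hence $\dim H^*(\Lambda V,d)=\sum_{k=0}^{r}\dim\Tor^R_k(\QQ,B)$, where $R=\QQ[z_1,\dots,z_r]$ acts on the finite-length module $B$ by $z_j\mapsto b_j$, and it suffices to show this sum is at least $2n+r=\dim V$.

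If $r=0$ the assertion is $\dim_\QQ B\ge 2n$. For a weighted complete intersection $\dim_\QQ B=\prod_j\deg Q_j\big/\prod_i\deg x_i$; sorting both sequences increasingly, decomposability and the regular-sequence property force $\deg Q_{(k)}\ge 2\deg x_{(k)}$ for every $k$. Indeed, if $k$ of the $Q_j$ had degree $<2\deg x_{(k)}$, each of them would vanish on the coordinate subspace spanned by the $x_i$ with $\deg x_i\ge\deg x_{(k)}$ (a decomposable monomial supported there has degree $\ge 2\deg x_{(k)}$), yet the ideal those $Q_j$ generate has height $\ge k$ while that subspace has dimension $\ge n-k+1$, a contradiction. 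Therefore $\dim_\QQ B\ge 2^n\ge 2n$.

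If $r\ge1$ I would combine three facts about $M_k:=\Tor^R_k(\QQ,B)=H_k(K(B;b_1,\dots,b_r))$. (i) $M_0=B/(b_1,\dots,b_r)$ still contains $1$ and the images of $x_1,\dots,x_n$, because the $b_j$ are images of elements of $\Lambda^{\ge2}V^{even}$; so $\dim M_0\ge n+1$. (ii) $\dim M_k=\dim M_{r-k}$: as $B$ is Gorenstein, $\Hom_\QQ(B,\QQ)\cong B$ as $B$-modules up to a degree shift, with each $b_j$ self-adjoint for the induced pairing, so $\Hom_\QQ(K(B;\underline b),\QQ)$ is isomorphic to $K(B;\underline b)$ with the homological grading reversed; exactness of $\Hom_\QQ(-,\QQ)$ gives the claim. (iii) $M_k\ne0$ for $0\le k\le r$: by induction on $r$ via the mapping-cone exact sequence $0\to\coker\bigl(b_r\colon M'_k\to M'_k\bigr)\to M_k\to\ker\bigl(b_r\colon M'_{k-1}\to M'_{k-1}\bigr)\to0$ (with $M'$ the Koszul homology of $b_1,\dots,b_{r-1}$), together with the fact that $B$ is Artinian, so every $b_j$ is nilpotent and thus acts with nonzero kernel on each nonzero $M'_j$, while $b_jB\ne B$ by Nakayama (which also gives $M_0\ne0$). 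Combining, and using (ii),
$$\dim H^*(\Lambda V,d)=\sum_{k=0}^{r}\dim M_k=2\dim M_0+\sum_{k=1}^{r-1}\dim M_k\ge 2(n+1)+(r-1)=2n+r+1>\dim V.$$

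The main obstacle is the reduction in the first paragraph: one must know that a pure elliptic model always contains such a sub-model $(\Lambda(V^{even}\oplus W_1),d)$ with finite-dimensional cohomology — a standard fact for pure elliptic spaces, but one resting on the existence of minimal reductions of $\mathfrak m$-primary ideals over an infinite field — and one must justify carefully the quasi-isomorphism with $K(B;\underline b)$ via the word-length spectral sequence. Establishing $\deg Q_{(k)}\ge 2\deg x_{(k)}$ in the weighted setting is the exact point where decomposability enters and should be written out with care. (Alternatively, once the problem is recast as the Tor inequality above, upper semicontinuity of $\Tor$ lets one reduce — within a fixed degree pattern — to a generic complete intersection $B$ with a generic choice of the $b_j$, where the Koszul homology is smallest; but the elementary bounds above already suffice.)
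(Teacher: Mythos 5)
Your argument is correct, and after the common reduction it diverges from the paper's in the step that matters most. Both proofs first rewrite $H^*(\Lambda V,d)$ as the Koszul homology $\bigoplus_k\Tor_k(\QQ,B)$ of a finite-length complete intersection $B=A/(Q_1,\dots,Q_n)$ with respect to the residual classes $b_1,\dots,b_r$ (the paper gets the length-$n$ regular sequence from Halperin's Lemma~8 and justifies the quasi-isomorphism exactly as you propose, via the word-length spectral sequence, so your ``main obstacle'' is a genuine but standard step, not a gap), and both use the observation that $\Tor_0=A/(P_1,\dots,P_{n+r})$ has dimension at least $n+1$ because the $P_j$ are decomposable. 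From there the paper proves $\dim\Tor_r\geq n+1$ by exhibiting $n+1$ explicit elements via the perfect pairing on $B$, and bounds the middle terms by $\binom{r}{k}$ through a flat deformation $P_i\mapsto P_i+tx_i$ to a reduced complete intersection together with an upper-semicontinuity lemma for $\Tor$ over $\QQ[t]$ (Lemma~\ref{lem:semincontinuity}); this yields $\dim H^*\geq 2n+2^r$. You instead upgrade the same perfect pairing to the full duality $\dim M_k=\dim M_{r-k}$ of Koszul homology over an Artinian Gorenstein ring, and replace the entire deformation/semicontinuity apparatus by the elementary non-vanishing $M_k\neq 0$ for all $k$, obtained from the mapping-cone exact sequence plus the fact that the $b_j$ are nilpotent (so they have nonzero kernel and, by Nakayama, nonzero cokernel on every nonzero $M'_k$); this gives the weaker but sufficient bound $2(n+1)+(r-1)=2n+r+1$. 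Your treatment of $r=0$ via the Hilbert-series formula $\dim_\QQ B=\prod\deg Q_j/\prod\deg x_i$ and the estimate $\deg Q_{(k)}\geq 2\deg x_{(k)}$ (which your height argument does establish) gives $2^n$, sharper than the paper's count of quadratic monomials in Remark~\ref{rem:rem}. The trade-off: the paper's route produces the stronger exponential bound $2n+2^r$, which it reuses in the hyperelliptic case, while yours is shorter, purely algebraic, and needs no genericity beyond the initial choice of regular sequence.
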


\subsection{Expressing the homology as a Tor functor}

To work over nice modules we would like to reorder the generators $y_1,\ldots,y_{n+r}$,
so that $P_1,\ldots, P_n$ form a regular sequence in $\Lambda (x_1,\ldots,x_n).$ Recall that this means that the image of $P_i$ in $\Lambda (x_1,\ldots,x_n)/(P_1,\ldots,
P_{i-1})$ is not a zero divisor, for any $i=1,\ldots, n$. But this is
not possible in general, as shown by the following example.

\begin{example} Let $V=\QQ\langle x_1,x_2,y_1, y_2, y_3 \rangle$, where $\deg(x_1)=2$ and
$\deg(x_2)=6.$ Define a differential $d$ on $\Lambda V$ by
$$
dy_1=x_1^6+x_2^2, \quad dy_2=x_1^9+x_2^3, \quad dy_3=x_1^4x_2+x_1x_2^2.
$$
Then $(\Lambda V, d)$ is a pure minimal model. It can be proved that is elliptic if
and only if there exist exact powers of $x_1$ and $x_2$. This is the case, since
$2x_1^{10}=d(x_1^4y_1+x_1y_2-x_2y_3)$ and $2x_2^4=d(x_2^2y_1+x_2y_2-x_1^5y_3)$.
But for the same reason, models $(\Lambda (x_1,x_2,y_i,y_j), d)$ are not elliptic
for any choice of indices $i, j$. This amounts to say that $dy_i, dy_j$ are not a regular sequence in $\Lambda (x_1,x_2).$
\end{example}

However, Halperin showed in \cite[Lemma 8]{Hal} that pure models always admit a basis $z_1,\ldots,z_{n+r}$ of $V^{odd}$
such that $dz_1,\ldots, dz_{n}$ is a regular sequence in $\Lambda (x_1,\ldots,x_n)$. This basis is not necessarily
homogeneous but it is possible to preserve the lower grading induced by the number of odd elements, that is
$$
(\Lambda V)^p_q=(\Lambda V^{even} \otimes \Lambda^q V^{odd})^p.
$$
This grading passes to cohomology and by taking into account the quasi-isomorphisms
\begin{align*}
(\Lambda (x_1,\ldots,x_n,y_1,\ldots,y_{n+r}), \ d) &\stackrel{\sim}{\longrightarrow} (\Lambda (x_1,\ldots,x_n,z_1,\ldots,z_{n+r}), \ d) \\ (\Lambda (x_1,\ldots,x_n,z_1,\ldots,z_n),\ d)&\stackrel{\sim}{\longrightarrow}(\Lambda (x_1,\ldots,x_n)/(dz_1,\ldots,dz_n), \ d)
\end{align*}
with respect to the lower grading, one deduces that:
$$
H_\ast(\Lambda V, d)\cong H_\ast(\Lambda (x_1,\ldots,x_n)\slash (dz_1,\ldots,dz_n) \otimes \Lambda (z_{n+1},\ldots,z_{n+r}), d).
$$

So let $z_1,\ldots,z_{n+r}$ be a basis such that $dz_1,\ldots,dz_{n}$ form a regular sequence.
Put $P_j=dz_j$ for $j=1,\ldots,n+r$ and consider the module
 $$
 M=\QQ[x_1,\ldots, x_n]/(P_1,\ldots, P_n)
 $$
over  the ring
  $$
R=\QQ[x_1,\ldots, x_n] \, .
 $$

Consider the ring
 $$
 S=\QQ[\lambda_1,\ldots, \lambda_r]
 $$
and the map $f:S\to R$, $\lambda_i \mapsto P_{n+i}$. Then $M$ becomes an
$S$-module.

Consider also the $S$-module
 $$
 \QQ_0=S/(\lambda_1,\ldots, \lambda_r).
 $$
Then we have the following:

\begin{proposition} \label{prop:tor}
  $H_*(\Lambda V,d) \cong \Tor^*_S(M,\QQ_0)$.
\end{proposition}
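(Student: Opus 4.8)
The plan is to compute $\Tor^*_S(M,\QQ_0)$ directly from the Koszul resolution of $\QQ_0$ over $S$, and then to observe that the resulting chain complex coincides, on the nose, with the one already produced for $H_*(\Lambda V,d)$ at the end of the previous subsection. Recall from there that, after replacing the $y_j$ by Halperin's basis $z_1,\ldots,z_{n+r}$ and using the quasi-isomorphisms with respect to the lower grading, one has
$$
H_*(\Lambda V,d)\cong H_*\!\bigl(M\otimes\Lambda(z_{n+1},\ldots,z_{n+r}),\,d\bigr),
$$
where $M=R/(P_1,\ldots,P_n)$ sits in $\Lambda^0(z_{n+1},\ldots,z_{n+r})$ with zero differential, and $d$ is the derivation determined by $dz_{n+i}=\overline{P_{n+i}}$, the class of $P_{n+i}$ in $M$.

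Now, since $S=\QQ[\lambda_1,\ldots,\lambda_r]$ is a polynomial ring, the elements $\lambda_1,\ldots,\lambda_r$ form a regular sequence, so the Koszul complex
$$
K_\bullet=\bigl(S\otimes\Lambda(z_{n+1},\ldots,z_{n+r}),\,\partial\bigr),\qquad \partial z_{n+i}=\lambda_i,
$$
is a free resolution of $\QQ_0=S/(\lambda_1,\ldots,\lambda_r)$. Hence $\Tor^*_S(M,\QQ_0)=H_*(M\otimes_S K_\bullet)$. Under the canonical identification $M\otimes_S\bigl(S\otimes\Lambda(z_{n+1},\ldots,z_{n+r})\bigr)=M\otimes\Lambda(z_{n+1},\ldots,z_{n+r})$, the differential of $M\otimes_S K_\bullet$ sends $z_{n+i}$ to $\lambda_i\cdot 1_M$, which equals $\overline{P_{n+i}}$ precisely because $M$ is made an $S$-module through $f\colon\lambda_i\mapsto P_{n+i}$. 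This is exactly the complex displayed above, so $\Tor^*_S(M,\QQ_0)\cong H_*(M\otimes\Lambda(z_{n+1},\ldots,z_{n+r}),d)\cong H_*(\Lambda V,d)$.

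The remaining points are bookkeeping rather than obstacles. Both differentials are the Koszul differential attached to the sequence $\overline{P_{n+1}},\ldots,\overline{P_{n+r}}$ in $M$, so checking that they agree amounts only to matching the sign conventions of a graded derivation against those of a Koszul complex, which is immediate since every $z_{n+i}$ has odd degree; and the grading of $\Lambda V$ by the number of odd generators corresponds to the homological degree of $\Tor$, while assigning $\deg\lambda_i=\deg P_{n+i}$ makes the internal gradings match as well. The only substantive ingredient is Halperin's reordering that turns $P_1,\ldots,P_n$ into a regular sequence — and hence guarantees the correct size of $M$ — which is already in hand; the content of the proposition is precisely the recognition that the reduced model of the previous subsection is $M\otimes_S(\text{Koszul resolution of }\QQ_0)$, so I do not expect a genuine difficulty.
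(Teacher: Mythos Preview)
Your proposal is correct and follows essentially the same route as the paper: compute $\Tor^*_S(M,\QQ_0)$ by tensoring $M$ (over $S$) with the Koszul resolution of $\QQ_0$, and recognize the resulting complex as $(M\otimes\Lambda(z_{n+1},\ldots,z_{n+r}),d')$ with $d'z_{n+i}=\overline{P_{n+i}}$. The only cosmetic difference is that the paper re-derives the quasi-isomorphism $(\Lambda V,d)\quism(M\otimes\Lambda W,d')$ inside the proof via a spectral-sequence comparison on the $\Lambda W$-filtration, whereas you simply invoke the displayed isomorphism stated just before the proposition.
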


\begin{proof}
  Let $U=\la z_1,\ldots, z_n \ra$, $W=\la z_{n+1},\ldots, z_{n+r}\ra$ so
  that $V^{odd}=U\oplus W$. Then the map $(\Lambda V^{even}\oplus U,d) \to (M,0)$ is a quasi-isomorphism. Actually, the Koszul complex
  $$
  R\ox \Lambda^n U \to R\ox \Lambda^{n-1} U \to
  \ldots \to R\ox \Lambda^1 U \to R \to M
  $$
is exact, which means that $(R\ox \Lambda U,d) \quism (M,0)$.

Therefore
 \begin{equation}\label{eqn:1}
 (\Lambda V,d)=(R \ox \Lambda U \ox \Lambda W,d) \quism (M\ox \Lambda W,d')\, ,
 \end{equation}
is an isomorphism,
where the differential $d'$ is defined as zero on $M$, and $d' z_{n+i}=\bar{P}_{n+i}\in M$. This
can be seen as follows: the map (\ref{eqn:1}) is a map of differential algebras. Grading both
algebras in such a way that $\Lambda^k W$ has degree $k$, we get two spectral sequences. The
map between their $E_1$-terms is
  $$
  H^* (R \ox \Lambda U,d) \ox \Lambda W \to  M \ox \Lambda W \, .
  $$
As this is an isomorphism, it follows that the map in the $E_\infty$-terms is also an isomorphism.
The $E_\infty$-terms are the homology of both algebras in (\ref{eqn:1}). So the map
(\ref{eqn:1}) is an isomorphism.

Finally, we have to identify $H^*(M\ox \Lambda W,d')\cong \Tor^*_S(M,\QQ_0)$. Note that the
homology of $(M\ox \Lambda W,d')$ is computed as follows: take the Koszul complex
  $$
  S\ox \Lambda^r W \to S\ox \Lambda^{r-1} W \to
  \ldots \to S\ox \Lambda^1 W \to S \to \QQ_0\, ,
  $$
and tensor it with $M$ over $S$ (with the $S$-module structure given above), to get
 $$
 (M\ox_S (S \ox \Lambda W),d') =(M\ox \Lambda W, d').
 $$
The homology of this computes $\Tor^*_S(M,\QQ_0)$.
\end{proof}

\begin{lemma} \label{lem:0andr}
Under our assumptions,
 $$\dim \Tor^0_S(M,\QQ_0)\geq n+1 \quad and \quad \dim \Tor^r_S(M,\QQ_0)\geq n+1.$$
\end{lemma}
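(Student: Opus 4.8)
The plan is to compute the two extreme Tor groups directly and bound their dimensions from below using the regular sequence hypothesis. For the degree-zero term, observe that $\Tor^0_S(M,\QQ_0) = M \otimes_S \QQ_0 = M/(P_{n+1},\ldots,P_{n+r})M$. Since $M = R/(P_1,\ldots,P_n)$ with $P_1,\ldots,P_n$ a regular sequence in the polynomial ring $R = \QQ[x_1,\ldots,x_n]$, the module $M$ is a complete intersection Artinian algebra, hence a Poincaré duality algebra of finite length. Quotienting further by the extra elements $P_{n+1},\ldots,P_{n+r}$ gives $\Tor^0_S(M,\QQ_0) = R/(P_1,\ldots,P_n,P_{n+r+1})$... more precisely $R/(P_1,\ldots,P_{n+r})$, which is exactly $H^0$ of the full model $(\Lambda V, d)$ in the lowest word-length grading. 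The point is that this quotient contains the classes $1, x_1^{k_1}, \ldots$ — I would argue $\dim_\QQ R/(P_1,\ldots,P_{n+r}) \geq n+1$ by exhibiting $n+1$ linearly independent elements, or more cleanly by noting that $M$ surjects onto it and $M$ itself has length $\geq n+1$ (indeed the socle of a complete intersection of $n$ forms in $n$ variables, together with the unit and the classes of the variables, already forces this; alternatively use that $\dim M = \prod \deg P_i \geq 2^n \geq n+1$ since each $P_i$ has no linear term).

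For the degree-$r$ term, I would use the symmetry coming from Poincaré duality. Since $(\Lambda V, d)$ is elliptic and hyperelliptic-of-pure-type, $H^*(\Lambda V, d)$ is a Poincaré duality algebra concentrated in the word-length gradings $0 \le q \le r$, and $\Tor^r_S(M,\QQ_0) = H_r$ is dual to $\Tor^0_S(M,\QQ_0) = H_0$. Concretely, tensoring the Koszul complex $S \otimes \Lambda^\bullet W \to \QQ_0$ with $M$ over $S$, the top homology $\Tor^r_S(M,\QQ_0)$ is the kernel of $M \otimes \Lambda^r W \to M \otimes \Lambda^{r-1} W$, i.e. $\{ m \in M : P_{n+i}\,m = 0 \text{ for all } i = 1,\ldots,r \} = \operatorname{Ann}_M(P_{n+1},\ldots,P_{n+r})$. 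Since $M$ is a finite-length Poincaré duality algebra, $\operatorname{Ann}_M(I)$ is dual as a vector space to $M/IM = \Tor^0_S(M,\QQ_0)$ for $I = (P_{n+1},\ldots,P_{n+r})$, whence the two dimensions are equal and the second inequality follows from the first.

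The main obstacle is making the lower bound $\dim \Tor^0 \geq n+1$ genuinely solid rather than heuristic. The cleanest route: $M = R/(P_1,\ldots,P_n)$ is graded Artinian Gorenstein, so its Hilbert series is a polynomial with positive coefficients, palindromic of the form $1 + (\text{positive}) t + \cdots + t^{s}$ with $s = \sum(\deg P_i - \deg x_i)$; in particular $\dim_\QQ M \geq 1 + \dim (R/\mathfrak{m}^2 + (P_1,\ldots,P_n))$, and since the $P_i$ have no linear terms the images of $1, x_1, \ldots, x_n$ in $M$ are linearly independent, giving $\dim M \geq n+1$ already. To descend to $\Tor^0_S(M,\QQ_0) = M/IM$ I would then invoke that $H^0(\Lambda V,d) = \QQ$ forces the generators $x_i$ to survive appropriately, or — better and self-contained — simply reprove the count $\dim\big(R/(P_1,\ldots,P_{n+r})\big) \geq n+1$ by the identical "no linear terms" argument applied to all $n+r$ polynomials: the images of $1, x_1, \ldots, x_n$ remain independent in degrees $\le 1$. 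That is the heart of the lemma; everything else is the duality bookkeeping above.
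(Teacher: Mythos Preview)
Your argument is correct and, for $\Tor^0$, identical to the paper's: both simply observe that $R/(P_1,\ldots,P_{n+r})$ contains the independent classes $1,x_1,\ldots,x_n$ because none of the $P_j$ has a linear term.

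For $\Tor^r$ the underlying idea is the same---Poincar\'e duality of the complete intersection $M$---but the executions differ. You invoke the general Gorenstein fact that for any ideal $I\subset M$ one has $\operatorname{Ann}_M(I)=I^{\perp}$ under the perfect pairing, hence $\dim\operatorname{Ann}_M(I)=\dim M/I$, and conclude $\dim\Tor^r=\dim\Tor^0$ in one stroke. The paper instead constructs by hand $n+1$ explicit elements $\nu,\mu_1,\ldots,\mu_n\in M$ dual to $1,x_1,\ldots,x_n$ (and orthogonal to every quadratic, hence to every $P_{n+i}$), and checks directly that they lie in the kernel of $(P_{n+1},\ldots,P_{n+r}):M\to M^{r}$. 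Your route is cleaner and actually yields the stronger statement $\dim\Tor^r=\dim\Tor^0$ rather than just $\geq n+1$; the paper's route is more hands-on and avoids citing the annihilator--orthogonal identity. (Your first suggestion, using Poincar\'e duality of $H^*(\Lambda V,d)$ to pair $H_0$ with $H_r$ in the word-length grading, would need an extra word about why the top class sits in word-length $r$; the second, purely algebraic, argument via duality in $M$ is the one that stands on its own.)
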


\begin{proof}
  Clearly,
   $$
   \Tor^0_S(M,\QQ_0) = M \ox_S \QQ_0=M/(\bar{P}_{n+1},\ldots, \bar{P}_{n+r})=R/(P_1,\ldots, P_{n+r})\, .
   $$
  As all the polynomials $P_1,\ldots, P_{n+r}$ have no linear part, this module contains the
  constant and linear monomials at least, so $\dim \Tor^0_S(M,\QQ_0)\geq n+1$.

  For the other inequality, note that $\Tor^r_S(M, \QQ_0)$ is the kernel of
   $M\ox \Lambda^r W \to M\ox \Lambda^{r-1} W$, i.e., the kernel of
   \begin{equation}\label{eqn:2}
   (P_{n+1},\ldots, P_{n+r}) : M \to  M \oplus \stackrel{(r)}{\ldots} \oplus M\,.
   \end{equation}
  Now we use the following fact: as $M$ is a complete intersection $R$-module (it is the quotient of
  $R$ by a regular sequence), it has Poincar\'e duality in the sense that there is a map $M\to \QQ$ such that
  $\Gamma:M\ox M \stackrel{\text{mult}}{\too} M\to \QQ$ is a perfect pairing. Take elements
  $\nu, \mu_j \in M$, $j=1,\ldots,n$, such that
   \begin{eqnarray*}
   &&\Gamma(\nu, x_j)=0, \ j=1,\ldots,n,\qquad  \qquad \Gamma(\nu,1)=1, \\
   &&\Gamma(\mu_j,x_k)=\delta_{jk}, \ j,k=1,\ldots,n,\qquad \Gamma(\mu_j,1)=0, \\
   &&\Gamma(\nu, Q)=\Gamma(\mu_j, Q)=0, \ \text{ for any quadratic $Q\in R$.}
   \end{eqnarray*}
 Since the elements $\nu, \mu_j$ are in the kernel of (\ref{eqn:2}) and they are linearly independent,
we get $\dim \Tor^r_S(M,\QQ_0)\geq n+1$.
\end{proof}

\subsection{Semicontinuity theorem}
We are going to prove a semicontinuity theorem for the Tor functors $\Tor^k_S(M,\QQ_0)$
for flat families of modules $M$ of finite length (i.e., finite-dimensional as $\QQ$-vector spaces).

Consider a variable $t$. A family of $S$-modules is a module $\cM$ over $S[t]$ such that for each
$t_0$, the $S$-module
 $$
 M_{t_0} = \cM /(t-t_0)
 $$
is of finite length. We say that $\cM$ is flat over $\QQ[t]$ if it is a flat $\QQ[t]$-module, under
the inclusion $\QQ[t]\inc S[t]$. Consider $\cM$ as a $\QQ[t]$-module. Then
  $$
  \cM \cong \QQ[t]^N \oplus \frac{\QQ[t]}{(t-t_1)^{b_1}} \oplus \ldots \oplus \frac{\QQ[t]}{(t-t_l)^{b_l}} \, ,
  $$
for some $N\geq 0$, $l\geq 0$, $1\leq b_1\leq \ldots \leq b_l$.
The module is flat if and only if there is no torsion part, i.e., $l=0$ (to see this, tensor the exact
sequence $0\to \QQ[t] \stackrel{t-t_i}{\too} \QQ[t] \to \QQ[t]/(t-t_i)\to 0$ with $\cM$). Note that for
generic $\xi$, $\length (M_{\xi})=N$. Therefore the flatness is equivalent to $\cM/(t-t_i)$ being of length
$N$, i.e.,
 $$
 \cM \text{ is flat } \Longleftrightarrow \length (M_t)=N, \, \forall t\, .
 $$

\begin{lemma} \label{lem:semincontinuity}
  For any flat family $\cM$,
  $$\dim \Tor^k_S(M_0,\QQ_0)\geq \dim \Tor^k_S(M_\xi,\QQ_0),$$
for generic $\xi \in \QQ$.
\end{lemma}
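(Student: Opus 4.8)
The statement is an upper-semicontinuity result for the dimension of $\Tor^k_S(M_t,\QQ_0)$ as $t$ varies in a flat family. The natural strategy is to compute all these $\Tor$ groups simultaneously over the base ring $\QQ[t]$, i.e. to build a single complex of $S[t]$-modules that specializes at every $t_0$ to a complex computing $\Tor^*_S(M_{t_0},\QQ_0)$, and then extract the semicontinuity from a rank-of-matrices argument. Concretely, I would take the Koszul complex $K_\bullet$ of $\QQ_0$ over $S$ (a finite free resolution, since $\lambda_1,\dots,\lambda_r$ is a regular sequence in $S$), base-change it to $S[t]$, and tensor with $\cM$ over $S[t]$. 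This yields a complex
\begin{equation*}
\cC_\bullet = \cM \ox_{S[t]} (S[t]\ox \Lambda^\bullet W),\qquad \cC_k = \cM \ox \Lambda^k W,
\end{equation*}
of finitely generated $\QQ[t]$-modules, with differentials that are $\QQ[t]$-linear. The point is that for each $t_0$, reducing mod $(t-t_0)$ commutes with tensoring over $S[t]$, so $\cC_\bullet\ox_{\QQ[t]}\QQ[t]/(t-t_0)$ is exactly the Koszul complex of $\QQ_0$ over $S$ tensored with $M_{t_0}$, whose homology is $\Tor^*_S(M_{t_0},\QQ_0)$.

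The main technical input is that $\cC_\bullet$ is a complex of \emph{free} $\QQ[t]$-modules: by hypothesis $\cM$ is flat over $\QQ[t]$, hence (being finitely generated over the PID $\QQ[t]$) free, and $\Lambda^k W$ is a finite-dimensional $\QQ$-vector space, so each $\cC_k$ is a finite free $\QQ[t]$-module. Thus the differentials $d_k\colon \cC_k\to\cC_{k-1}$ are represented, after a choice of bases, by matrices over $\QQ[t]$. Now I invoke the universal-coefficients / base-change principle: over the localization of $\QQ[t]$ at $(t-t_0)$, or more simply by a direct rank computation, one has
\begin{equation*}
\dim_{\QQ}\! \big(H_k(\cC_\bullet)\ox_{\QQ[t]}\QQ[t]/(t-t_0)\big) \;\leq\; \dim_{\QQ} H_k\!\big(\cC_\bullet\ox_{\QQ[t]}\QQ[t]/(t-t_0)\big),
\end{equation*}
because passing to the quotient can only introduce new cycles and kill fewer boundaries. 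Combined with the identification of the right-hand side as $\dim\Tor^k_S(M_{t_0},\QQ_0)$, this gives an \emph{a priori} inequality in the wrong direction; the correct move is instead to track the ranks of the matrices $d_k$ over $\QQ[t]$. Set $\rho_k=\operatorname{rank}_{\QQ(t)} d_k$ and $c_k=\operatorname{rank}_{\QQ}$ of $d_k$ specialized at $t_0$. One always has $c_k\leq\rho_k$, with equality for all but finitely many $t_0$ (the rank drops exactly where certain minors vanish). Since $\cC_k$ is free of some rank $f_k$ over $\QQ[t]$, the specialization $\cC_k\ox\QQ[t]/(t-t_0)$ has $\QQ$-dimension $f_k$ for every $t_0$, and
\begin{equation*}
\dim\Tor^k_S(M_{t_0},\QQ_0) = f_k - c_k(t_0) - c_{k+1}(t_0) \;\geq\; f_k - \rho_k - \rho_{k+1},
\end{equation*}
with equality precisely when $t_0$ avoids the finite bad locus of $d_k$ and $d_{k+1}$. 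Taking $t_0=0$ on the left and $t_0=\xi$ generic on the right yields the claimed inequality $\dim\Tor^k_S(M_0,\QQ_0)\geq\dim\Tor^k_S(M_\xi,\QQ_0)$.

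The step I expect to require the most care is verifying cleanly that reduction mod $(t-t_0)$ of the complex $\cC_\bullet$ really computes $\Tor^*_S(M_{t_0},\QQ_0)$ — i.e. that $S[t]\ox\Lambda^\bullet W$ stays a resolution of $\QQ_0$ after reducing the variable $t$ (it does, because it is just the Koszul complex over $S$ with scalars extended to $\QQ[t]$, which is unaffected by $t\mapsto t_0$), together with the fact that $\cM\ox_{S[t]}(-)$ commutes with $-\ox_{\QQ[t]}\QQ[t]/(t-t_0)$, which follows from the right-exactness of tensor and the freeness of the Koszul terms. The rank-of-matrices semicontinuity itself, once the complex is set up and known to be term-wise free over $\QQ[t]$, is routine linear algebra over the PID $\QQ[t]$: ranks of integer-coefficient (here polynomial-coefficient) matrices can only drop under specialization, and do so only on a proper closed — hence finite — subset of the line.
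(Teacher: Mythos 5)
Your proof is correct, and the overall architecture matches the paper's (produce a complex of finite free $\QQ[t]$-modules whose fiber at each $t_0$ computes $\Tor^*_S(M_{t_0},\QQ_0)$, then argue that the generic fiber has the smallest homology), but the implementation differs in two genuine ways. First, you resolve the \emph{second} argument of Tor: you tensor $\cM$ with the Koszul resolution of $\QQ_0$ over $S$, using flatness only through the fact that a finitely generated flat module over the PID $\QQ[t]$ is free, so that each term $\cM\ox\Lambda^kW$ is free over $\QQ[t]$. The paper instead takes a free resolution of $\cM$ over $S[t]$ and must then use flatness to prove $\Tor^1_{S[t]}(\cM,S[t]/(t))=0$, i.e.\ that the resolution specializes at $t=0$ to a resolution of $M_0$, before tensoring with $\QQ_0$. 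Your route bypasses that verification entirely (at the cost of invoking the balancing of Tor, which the paper itself already uses in Proposition~\ref{prop:tor}), and it requires noting that $\cM$ is finitely generated over $\QQ[t]$ --- implicit in the paper's structure-theorem decomposition of $\cM$ and true in the application. Second, for the final semicontinuity step on a complex of free $\QQ[t]$-modules, you use the elementary observation that the rank of a matrix over $\QQ[t]$ can only drop under specialization and does so only on a finite set, giving $\dim H_k = f_k - c_k - c_{k+1} \geq f_k - \rho_k - \rho_{k+1}$; the paper instead splits the complex into short exact sequences with free cycle and boundary modules and runs a universal-coefficients argument. These are two proofs of the same linear-algebra fact; yours is more elementary. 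One cosmetic point: the detour in your middle paragraph through the universal-coefficients inequality ``in the wrong direction'' is harmless but unnecessary, since the minors argument you settle on is self-contained and suffices.
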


\begin{proof}
  Let us resolve $\cM$ as a $S[t]$-module:
 \begin{equation}\label{eqn:3}
 0 \to S[t]^{a_r} \to \ldots \to S[t]^{a_0} \to \cM \to 0\, .
  \end{equation}

As $\cM$ is flat as $\QQ[t]$-module, if we tensor the inclusion $\QQ[t] \stackrel{t}{\inc}
\QQ[t]$ by $\cM$ over $\QQ[t]$, we have that $\cM\stackrel{t}{\inc} \cM$ is an inclusion.
Hence the sequence
  $$
  0\to \cM \stackrel{t}{\inc} \cM \to \cM/(t) \to 0
  $$
is exact. But this sequence is the sequence $0\to S[t]\to S[t]\to S[t]/(t) \to 0$ tensored
by $\cM$ over $S[t]$. Hence $\Tor^1_{S[t]}(\cM, S[t]/(t))=0$. Obviously
$\Tor^j_{S[t]}(\cM, S[t]/(t))=0$ for $j\geq 2$ (since the resolution $S[t]/(t)$ has two
terms).

Using the above, we can tensor (\ref{eqn:3})$\otimes_{S[t]} S[t]/(t)$ to get an exact
sequence:
 \begin{equation}\label{eqn:4}
 0 \to S^{a_r} \to \ldots \to S^{a_0} \to M_{0} \to 0\, .
  \end{equation}
Now we tensor (\ref{eqn:4}) by $\otimes_S \QQ_0$ and take homology to obtain
$\Tor^*_S(M_0,\QQ_0)$. But
  $$
  (\ref{eqn:4}) \otimes_S \QQ_0 = (\ref{eqn:3}) \otimes_{S[t]} \QQ_0=
  ((\ref{eqn:3}) \otimes_{S[t]} \QQ[t]) \otimes_{\QQ[t]} \QQ[t]/(t) =
 (\ref{eqn:5}) \otimes_{\QQ[t]} \QQ[t]/(t)\, ,
  $$
where $\QQ_0=S[t]/(\lambda_1,\ldots, \lambda_r, t)$, and
 \begin{equation}\label{eqn:5}
 0 \to \QQ[t]^{a_r} \to \ldots \to \QQ[t]^{a_0} \to \cF=\cM/(\lambda_1,\ldots, \lambda_r) \to 0  .
  \end{equation}
(This is just a complex, maybe not exact.) Analogously,
  $$
 \Tor^*_S(M_0,\QQ_\xi) = H^*((\ref{eqn:5}) \otimes_{\QQ[t]} \QQ[t]/(t-\xi))\, .
 $$

\medskip

So it remains to see that for a complex $L_\bullet$ of free $\QQ[t]$-modules like (\ref{eqn:5}), it holds that
 $$
 \dim H^k(L_\bullet \ox \QQ[t]/(t-\xi)) \leq \dim H^k(L_\bullet \ox \QQ[t]/(t)),
 $$
for generic $\xi$. (Tensor products are over $\QQ[t]$, which we omit in the notation henceforth.)
For proving this, just split (\ref{eqn:5}) as short exact sequences
 \begin{equation}\label{eqn:5a}
 0\to Z_i\to L_i\to B_{i-1}\to 0,
 \end{equation}
and
note that $Z_i, B_i$ are free $\QQ[t]$-modules, being submodules of free modules.
So $Z_i=\QQ[t]^{z_i}$ and $B_i=\QQ[t]^{b_i}$. Now $0\to B_i\to Z_i\to H^i(L_\bullet)\to 0$ gives
that
 $$
 H^i(L_\bullet)=\QQ[t]^{z_i-b_i}\oplus \text{torsion}.
 $$
For generic $\xi$, we have $\dim H^i (L_\bullet \ox \QQ[t]/(t-\xi))=z_i-b_i$. Hence
  $$
  \begin{array}{ccccccccc}
 0 &\to& Z_i\ox \QQ[t]/(t) &\to &L_i\ox \QQ[t]/(t)&\to& B_{i-1}\ox \QQ[t]/(t)&\to& 0 \\
 & & \downarrow & & || & & \downarrow \\
 0 &\to& Z_i(L_\bullet\ox \QQ[t]/(t)) &\to &L_i\ox \QQ[t]/(t)&\to& B_{i-1}(L_\bullet\ox \QQ[t]/(t))&\to& 0\, .
  \end{array}
  $$
The first sequence is (\ref{eqn:5a}) tensored by $\QQ[t]/(t)$. Thus the last vertical map
is surjective, and the first vertical map is injective.

Therefore, we get:
 \begin{align*}
 \dim H^i(L_\bullet \ox \QQ[t]/(t)) &= \dim  Z_i(L_\bullet \ox \QQ[t]/(t))  - \dim B_i(L_\bullet \ox \QQ[t]/(t)) \\
 &\geq \dim  Z_i \ox \QQ[t]/(t) -\dim  B_i \ox \QQ[t]/(t) \\
 &= \dim H^i(L_\bullet)\ox \QQ[t]/(t) - \dim  \Tor_1^{\QQ[t]}(H^i(L_\bullet), \QQ[t]/(t))\\
 &= z_i-b_i\, ,
 \end{align*}
where we have used in the third line that there is an exact sequence
 $$
 0\to \Tor_1^{\QQ[t]}(H^i(L_\bullet), \QQ[t]/(t))
 \to B_i\ox \QQ[t]/(t) \to Z_i\ox \QQ[t]/(t) \to H^i(L_\bullet)\ox \QQ[t]/(t) \to 0,
 $$
and in the fourth line that $\dim (N \ox \QQ[t]/(t))= \dim  \Tor_1^{\QQ[t]}(N, \QQ[t]/(t))$
for a torsion $\QQ[t]$-module $N$.
\end{proof}

\subsection{Proof of theorem \ref{thm:sec-proof}}
We proceed to the proof of the Hilali conjecture for elliptic spaces of pure type.
We have to prove that
 $$
 \dim H^*(\Lambda V,d)\geq 2n+r.
 $$
By proposition \ref{prop:tor}, we need to prove that $\dim \Tor^*_S(M,\QQ_0)\geq 2n+r$. Consider the family
  $$
   \cM=\frac{\QQ[t,x_1,\ldots,x_n]}{(P_1+tx_1,\ldots,P_n+tx_n)}\, .
  $$
For small $t$, the hypersurfaces $P_1+tx_1,\ldots,P_n+tx_n$ intersect in $N$ points
near the origin accounted with multiplicity, where $N=\length (M)$. Therefore
$\cM$ is a flat family. By lemma \ref{lem:semincontinuity}, it is enough to bound
below $\dim \Tor^*_S(M_\xi,\QQ_0)$. But for generic $t$, the hypersurfaces
$P_1+tx_1,\ldots,P_n+tx_n$ intersect in $N$ distinct points (at least, it is
clear that they intersect in several points and the origin is isolated of multiplicity
one). Therefore
  $$
  \Tor^k_S(M_\xi,\QQ_0) = \Tor^k_S(\QQ_0,\QQ_0) \, .
  $$
This is easily computed to have dimension $\binom{r}{k}$ (using the Koszul complex).
So, using also lemma \ref{lem:0andr},
  \begin{align*}
  \dim \Tor^*_S(M,\QQ_0) &\geq (n+1) + \sum_{k=1}^{r-1}  \dim \Tor^k_S(M,\QQ_0) + (n+1) \\
   & \geq 2n+ 2 + \sum_{k=1}^{r-1}  \dim \Tor^k_S(M_\xi,\QQ_0) \\
&= 2n+2 + \sum_{k=1}^{r-1}  \binom{r}{k} = 2n+ 2^r \geq 2n+r \, .
   \end{align*}

\begin{remark} \label{rem:rem}
   The above computation works for $r\geq 1$. If $r=0$ then we have to prove that $\length(M)\geq 2n$.
 But then computing the degree $2$ non-zero elements in $M$, we have that they are at least $\binom{n+1}{2}-n$.
So for any $n$,
$$
\length(M)\geq 1+n+\binom{n+1}{2}-n= \frac12 (n+1) n +1 \geq 2n.
$$
\end{remark}

\section{The hyperelliptic case} \label{sec:proof2}

A minimal model $(\Lambda V,d)$ of elliptic type is \emph{hyperelliptic} if $V=V^{even}\oplus V^{odd}$,
and
 \begin{equation}\label{eqn:hyperelliptic}
  d(V^{even})=0 , \quad d(V^{odd}) \subset \Lambda^+ V^{even}\otimes \Lambda V^{odd}\, .
 \end{equation}
An elliptic space is hyperelliptic if its minimal model is so.
Note that elliptic spaces of pure type are in particular hyperelliptic.

By proposition \ref{prop:mod}
we have that $\dim V^{even}-\dim V^{odd} \leq 0$. Let $n=\dim V^{even}$ and
$n+r=\dim V^{odd}$, where $r\geq 0$.
Write $x_1,\ldots, x_n$ for
the generators of even degree, and $y_1,\ldots, y_{n+r}$ for the generators of
odd degree. Then $dx_i=0$, and $dy_j=P_j(x_1,\ldots, x_n, y_1,\ldots, y_{j-1})$, where $P_j$
do not have linear terms.

In this section we prove the following:

\begin{theorem}\label{thm:sec-proof2}
 The Hilali conjecture holds for hyperelliptic spaces.
\end{theorem}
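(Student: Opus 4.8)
The plan is to reduce the hyperelliptic case to a situation where we can apply the pure-type result already established (Theorem \ref{thm:sec-proof}), by deforming the differential. Given a hyperelliptic minimal model $(\Lambda V,d)$, write $dy_j=P_j(x_1,\dots,x_n,y_1,\dots,y_{j-1})$ with no linear terms, and filter by the lower grading (the word-length in the odd generators $y_i$). The associated graded differential $d_0$ is obtained by keeping only the part of each $P_j$ lying in $\Lambda^+ V^{even}$; that is, $d_0 y_j$ is the purely-even component of $P_j$. Then $(\Lambda V,d_0)$ is a \emph{pure} minimal model. The first step is therefore to set up the spectral sequence of this filtration, whose $E_1$-page is $H^*(\Lambda V,d_0)$, and to observe $\dim H^*(\Lambda V,d)\le\dim H^*(\Lambda V,d_0)$ — no, one needs the inequality in the \emph{other} direction, so this must be replaced by a genuine semicontinuity argument rather than a naive spectral-sequence estimate.

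The cleaner route, matching the hint in the introduction (``a semicontinuity result for the homology of elliptic spaces''), is to introduce a one-parameter family interpolating between $d$ and $d_0$: rescale, for a parameter $t$, each generator $y_j$ by an appropriate power of $t$ so that $d_t$ has $d_ty_j$ equal to $d_0y_j$ plus $t$ times the higher-word-length terms, and $d_t\to d_0$ as $t\to 0$ while $d_t\cong d$ for $t\ne 0$. The homology $H^*(\Lambda V,d_t)$ is the cohomology of a complex of $\QQ[t]$-modules (the finite-dimensional pieces of $\Lambda V\otimes\QQ[t]$ with differential $d_t$), and the ellipticity of $(\Lambda V,d)$ forces the total dimension to be finite and locally constant away from $t=0$, with the flatness/length argument of Lemma \ref{lem:semincontinuity} (applied in the variable $t$) giving
$$
\dim H^*(\Lambda V,d)=\dim H^*(\Lambda V,d_\xi)\le\dim H^*(\Lambda V,d_0)
$$
for generic $\xi$. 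Here one must check that the relevant $\QQ[t]$-modules of cycles and boundaries are free — submodules of free $\QQ[t]$-modules are free since $\QQ[t]$ is a PID — exactly as in the proof of Lemma \ref{lem:semincontinuity}, and that $(\Lambda V,d_0)$ is still elliptic so the estimate is not vacuous.

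Once this semicontinuity is in place, apply Theorem \ref{thm:sec-proof} to the pure model $(\Lambda V,d_0)$: it has the same $V$, hence the same $n$ and $r$, so $\dim H^*(\Lambda V,d_0)\ge 2n+r=\dim V$, and the chain of inequalities yields $\dim V\le\dim H^*(\Lambda V,d)$, which is Theorem \ref{thm:sec-proof2}. The introduction also promises ``two different proofs of an inequality from which the result follows'', so a second, more self-contained route is worth developing in parallel: again filter by word-length in the odd generators and run the spectral sequence, but now argue directly that the $r$ ``extra'' odd generators $z_{n+1},\dots,z_{n+r}$ (after a Halperin-type change of basis making $d_0z_1,\dots,d_0z_n$ a regular sequence, as in Section \ref{sec:proof}) contribute $\Tor$-type classes to $H^*(\Lambda V,d_0)$ that survive the higher differentials, and separately produce the $2n$ classes in low word-length (the constants, linears, and their Poincar\'e duals in the complete-intersection part), mimicking Lemma \ref{lem:0andr}.

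The main obstacle I expect is the semicontinuity step: one must verify carefully that the family $\{(\Lambda V,d_t)\}$ is genuinely ``flat'' in the sense needed — i.e.\ that $\dim_\QQ H^*(\Lambda V,d_t)$ does not jump at $t=0$ in the wrong direction — and this rests on showing the total-dimension function is upper semicontinuous, with equality generically, using the PID structure of $\QQ[t]$ and the fact that each graded piece $(\Lambda V)^k\otimes\QQ[t]$ is a finitely generated free $\QQ[t]$-module (finiteness coming from ellipticity of $(\Lambda V,d)$, but one must also ensure $(\Lambda V,d_0)$ remains elliptic — equivalently that the regular-sequence condition of Halperin is preserved under the degeneration, which is where a delicate point may hide). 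Handling the grading bookkeeping so that the rescaling $y_j\mapsto t^{?}y_j$ is compatible with $d$ being a derivation, and so that $d_t^2=0$ for all $t$, is routine but needs care.
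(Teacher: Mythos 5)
Your central step fails for the reason you yourself flag at the start and then do not actually fix: semicontinuity gives the inequality in the wrong direction. In your family $d_t=d_0+t(\text{higher word-length terms})$, the special fibre at $t=0$ is the pure model $(\Lambda V,d_0)$ and the generic fibre is isomorphic to $(\Lambda V,d)$, so the flatness/PID argument yields
$$
\dim H^*(\Lambda V,d)=\dim H^*(\Lambda V,d_\xi)\leq \dim H^*(\Lambda V,d_0),
$$
an \emph{upper} bound on $\dim H^*(\Lambda V,d)$. Combining this with the lower bound $\dim H^*(\Lambda V,d_0)\geq 2n+r$ from the pure case gives nothing: the two inequalities both point away from the conclusion $\dim H^*(\Lambda V,d)\geq 2n+r$. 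Your "chain of inequalities yields $\dim V\leq \dim H^*(\Lambda V,d)$" is a non sequitur. The same problem kills your second route: the word-length spectral sequence starting from $H^*(\Lambda V,d_0)$ only bounds $\dim H^*(\Lambda V,d)$ from above, and you give no mechanism forcing the $2n+r$ classes of the pure model to survive the higher differentials.

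The paper's proof uses semicontinuity in the opposite configuration: the \emph{original} model (tensored with an extra closed odd generator $\bar y_1$ of degree $\deg(x_1)-1$) sits at the special fibre $t=0$, and the perturbation $d_t\bar y_1=tx_1$ makes the generic fibre quasi-isomorphic to the smaller model with $x_1$ removed. Semicontinuity then gives $\dim H(\Lambda(x_2,\dots,x_n,y_1,\dots,y_{n+r}),d)\leq 2\dim H(\Lambda V,d)$, i.e.\ a \emph{lower} bound on the original homology; iterating $n$ times and using that the hyperelliptic condition forces $d=0$ on $\Lambda(y_1,\dots,y_{n+r})$ yields $\dim H(\Lambda V,d)\geq 2^r$. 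Even this inequality alone does not suffice: the paper must still combine it with $\chi=0$, a count of quadratic cocycles surviving in cohomology (at least $\binom{n+1}{2}-(n+r)$ of them), and an explicit analysis of the single remaining case $n=r=3$. Your proposal contains neither the correctly oriented deformation nor any substitute for these additional counting arguments, so the gap is not repairable by tightening the flatness bookkeeping you worry about at the end.
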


\subsection{Semicontinuity for elliptic minimal models}

\begin{lemma} \label{lem:semicontinuity2}
 Let $V$ be a graded rational finite-dimensional vector space, and let $d$ be a differential for
  $\Lambda V \ox \QQ[t]$ such that $dt=0$, where $t$ has degree $0$.
  Take a non-numerable field $\bk \supset \QQ$, $V_\bk=V\otimes \bk$.
  We denote by $d_\xi$ the differential
  induced in $\Lambda V_\bk = \Lambda V\ox \bk[t]/(t-\xi)$, for $\xi\in \bk$. Then
    $$
     \dim H(\Lambda V_\bk, d_\xi)\leq \dim H(\Lambda V, d_0)\, ,
    $$
 for generic $\xi\in \bk$.
\end{lemma}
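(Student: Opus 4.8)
The plan is to mimic the argument of Lemma \ref{lem:semincontinuity}, replacing the Koszul/Tor machinery by the honest cochain complex $(\Lambda V\ox\QQ[t],d)$ viewed as a complex of free $\QQ[t]$-modules in each degree. First I would organize $(\Lambda V\ox\QQ[t],d)$ by total degree, so that in each degree $p$ we have a free $\QQ[t]$-module $L^p=(\Lambda V)^p\ox\QQ[t]$ of finite rank (finiteness of $V$ guarantees each $(\Lambda V)^p$ is finite-dimensional), and the differential is a $\QQ[t]$-linear map $d\colon L^p\to L^{p+1}$. Passing to the field extension $\bk$ changes nothing: $(\Lambda V\ox\bk[t],d)$ is a complex of free $\bk[t]$-modules, and $(\Lambda V_\bk,d_\xi)=L_\bullet\ox_{\bk[t]}\bk[t]/(t-\xi)$. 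So the statement reduces to the purely homological assertion: for a cochain complex $L_\bullet$ of free $\bk[t]$-modules of finite rank, $\dim_\bk H^p(L_\bullet\ox\bk[t]/(t-\xi))\leq \dim_\bk H^p(L_\bullet\ox\bk[t]/(t-0))$ for generic $\xi\in\bk$.

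For this reduced statement I would argue exactly as in the proof of Lemma \ref{lem:semincontinuity}. Since $\bk[t]$ is a PID, the submodules $Z^p=\ker(d\colon L^p\to L^{p+1})$ and $B^p=\im(d\colon L^{p-1}\to L^p)$ are free, say $Z^p=\bk[t]^{z_p}$ and $B^p=\bk[t]^{b_p}$, and the short exact sequences $0\to Z^p\to L^p\to B^{p+1}\to 0$ split. From $0\to B^p\to Z^p\to H^p(L_\bullet)\to 0$ one gets $H^p(L_\bullet)=\bk[t]^{z_p-b_p}\oplus(\text{torsion})$, and for all $\xi$ outside the finite set of roots of the torsion annihilators one has $\dim_\bk H^p(L_\bullet\ox\bk[t]/(t-\xi))=z_p-b_p$; here the non-numerability of $\bk$ is used so that such a generic $\xi$ exists. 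At $\xi=0$ one tensors $0\to Z^p\to L^p\to B^{p+1}\to 0$ with $\bk[t]/(t)$ to see (as in the cited proof) that the natural map $Z^p\ox\bk[t]/(t)\to Z^p(L_\bullet\ox\bk[t]/(t))$ is injective and $B^p\ox\bk[t]/(t)\to B^p(L_\bullet\ox\bk[t]/(t))$ is surjective; then
\[
\dim_\bk H^p(L_\bullet\ox\bk[t]/(t))
= \dim_\bk Z^p(L_\bullet\ox\bk[t]/(t)) - \dim_\bk B^p(L_\bullet\ox\bk[t]/(t))
\ge z_p - b_p,
\]
which is the desired inequality degree by degree. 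Summing over $p$ gives $\dim_\bk H(\Lambda V_\bk,d_\xi)\le \dim_\bk H(\Lambda V,d_0)$ for generic $\xi$, and since $d_0$ is a $\QQ$-differential, $\dim_\bk H(\Lambda V_\bk,d_0)=\dim_\QQ H(\Lambda V,d_0)$, finishing the proof.

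The one genuinely delicate point, and the step I expect to be the main obstacle, is that unlike the situation in Lemma \ref{lem:semincontinuity} there is no flatness hypothesis here: the family $(\Lambda V\ox\QQ[t],d)$ need not be flat over $\QQ[t]$, so $L^p$ is free but $H^p(L_\bullet)$ may well have torsion. This is fine — the argument above does not require flatness, only that $\bk[t]$ is a PID so that $Z^p,B^p$ are free and the torsion of $H^p(L_\bullet)$ is supported on finitely many points of $\bk$. I would be careful to state the generic condition on $\xi$ as "$t-\xi$ does not divide the annihilator of $\bigoplus_p \text{torsion}(H^p(L_\bullet))$," which is a cofinite condition, hence satisfied generically in the non-numerable field $\bk$. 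A secondary point to check is that $\Lambda V\ox\QQ[t]$ is indeed the polynomial ring $\QQ[t]$ adjoined to $\Lambda V$ with $dt=0$ and $t$ in degree $0$, so that the grading by total degree splits it into free $\QQ[t]$-modules of finite rank in each degree; this is immediate from finite-dimensionality of $V$ and the hypothesis $\deg t=0$.
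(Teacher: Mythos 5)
Your proof is correct and takes essentially the same route as the paper's: both arguments rest on the cocycles and coboundaries being free $\bk[t]$-modules, identify the generic fibre dimension with the free rank of the cohomology (the torsion being supported at countably many points, which is where non-numerability of $\bk$ enters), and bound the fibre at $t=0$ from below by that rank via the natural comparison maps — the paper just treats the whole graded complex at once where you argue degree by degree. The only quibble is that your ``cofinite condition'' on $\xi$ should be ``co-countable'' (finitely many bad points in each degree, countably many degrees), which is harmless since you correctly invoke non-numerability.
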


\begin{proof}
  Write
   $$
   0 \to \tilde K \to \Lambda V \ox \bk [t] \to \tilde I \to 0\, ,
   $$
  where $\tilde K$ and $\tilde I$ are the kernel and image of $d$, resp.
  Note that both $\tilde K$ and $\tilde I$ are free $\bk[t]$-modules, being submodules of
  $\Lambda V \ox \bk[t]$.

  Denote by $\bk_\xi= \bk[t]/(t-\xi)$. Then we have a diagram
   \begin{equation} \label{eqn:diagram}
   \begin{array}{ccccccccc}
    0 & \to & \tilde K \ox \bk_\xi & \to &
    (\Lambda V \ox \bk[t]) \ox \bk_\xi & \to & \tilde I \ox \bk_\xi & \to &0\\
     && \downarrow && ||  && \downarrow \\
     0 & \to & K  & \to & \Lambda V_\bk & \to & I  & \to &0.
     \end{array}
     \end{equation}
   (Here the tensor products of all $\bk[t]$-modules are over $\bk[t]$, and the tensor product
   $\Lambda V \ox \bk[t]$ is over the rationals.)
    Therefore the last vertical map is a surjection, and the first map is an injection.

    We have
     $$
     0 \to \tilde I \to \tilde K  \to H(\Lambda V \ox \bk[t],d) \to 0\, ,
     $$
    which is an exact sequence of $\bk[t]$-modules. Then $H(\Lambda V \ox \bk[t],d)$
    contains a free part and a torsion part. The torsion is supported at some points,
    which are at most countably many. Therefore for generic $\xi\in \bk$,
       $$
     0 \to \tilde I\ox \bk_\xi \to \tilde K\ox \bk_\xi  \to H(\Lambda V \ox \bk[t],d)\ox \bk_\xi \to 0
     $$
     is exact.
   As $\tilde I\ox\bk_\xi \surj I \subset K$ and $ \tilde I\ox \bk_\xi \subset \tilde K\ox \bk_\xi
   \subset K$, we have that the last map in (\ref{eqn:diagram}) is an injection, therefore an isomorphism,
 thus first map is also an isomorphism by the snake lemma.

  Note that also, when tensoring with $\bk(t)$, we have an exact sequence
          $$
     0 \to \tilde I\ox \bk(t) \to \tilde K\ox \bk(t)  \to H(\Lambda V \ox \bk[t],d)\ox \bk(t) \to 0\, .
     $$
     Also $H(\Lambda V \ox \bk[t],d)\ox \bk(t)=H(\Lambda V \ox \bk(t),d)$, since $\bk(t)$ is a flat
   $\bk[t]$-module. Hence
     \begin{align*}
     \dim H(\Lambda V_\bk, d_\xi) & =\dim K-\dim I \\
     &= \dim \tilde K\ox \bk_\xi -\dim \tilde I\ox \bk_\xi \\
     &= \dim H(\Lambda V \ox \bk(t),d)\, .
     \end{align*}
     In the first line, we mean $\dim K-\dim I= \sum_{d\geq 0} (\dim K^d - \dim I^d)$.

     Take now $\xi =0$.
     The map $\tilde K \to K \to K/I$ factors as $\tilde K/\tilde I \to K/I$. Tensor this map by
     $\bk_0$ to get $(\tilde K/\tilde I) \ox \bk_0 \to K/I$. Note that there is an exact sequence
     $$
     \tilde I \ox \bk_0 \to \tilde K \ox \bk_0 \to (\tilde K/\tilde I) \ox \bk_0 \to 0,
     $$
     but the first map may not be injective. Then there is a map
      $$
     \frac{\tilde K \ox \bk_0}{\mathrm{Im} (\tilde I\ox\bk_0)} = (\tilde K/\tilde I) \ox \bk_0  \to K/I \, .
     $$
     By (\ref{eqn:diagram}), this is an inclusion. Now we have:
     \begin{align*}
     \dim H(\Lambda V, d_\xi) &= \dim H(\Lambda V\ox \bk(t),d) \\
      &= \dim (\tilde K/{\tilde I}) \ox \bk(t)  \\
      & \leq  \dim ({\tilde K}/{\tilde I}) \ox \bk_0 \\
      & =  \dim \frac{\tilde K \ox \bk_0}{\mathrm{Im} (\tilde I\ox\bk_0)} \\
      & \leq \dim K/I \\
      &= \dim H(\Lambda V_\bk, d_0) \\
      &= \dim_\QQ H(\Lambda V , d_0) \, .
    \end{align*}
\end{proof}

\subsection{Perturbing the minimal model}
 Let $x_1,\ldots, x_n$ denote generators for $V^{even}$, and $y_1,\ldots, y_{n+r}$ generators for
 $V^{odd}$. Here $dx_i=0$ and $dy_j=P_j(x_1,\ldots, x_n, y_1, \ldots, y_{j-1})$.

 We consider the algebra
 $$
  (\Lambda W,d)= (\Lambda V, d)\ox (\Lambda \bar{y}_1,0)\, ,
  $$
   where $\deg(\bar{y}_1)=\deg(x_1)-1$. Then
     $$
  \dim H(\Lambda W,d)= 2 \dim H(\Lambda V, d)\, .
  $$

 Consider now the differential $\delta$ on $\Lambda W$ such that $\delta x_j=0$, $\delta y_j=0$
 and $\delta \bar{y}_{1}=x_1$. Hence $\delta^2=0$ and $d\delta=\delta d= 0$. So
   $$
   d_t=d + t\delta
   $$
 is a differential on $\Lambda W \ox \bk[t]$.

 For generic $\xi\in \bk$, $(\Lambda W_\bk, d_\xi)$ verifies
 that $d_\xi \bar{y}_{1}= \xi x_1$. So for non-zero $\xi$,
 there is a KS-extension \cite[\S1.4]{OT}
  $$
     (\Lambda (x_1,\bar{y}_{1}),d_\xi ) \too (\Lambda W_\bk, d_\xi)
    \too (\Lambda (x_2,\ldots, x_n, y_1,\ldots y_{n+r}), d) \, .
     $$
     As $H( \Lambda (x_1,\bar{y}_{1}),d_\xi )=\bk$, we have that
  \begin{equation*}
  H(\Lambda W_\bk, d_\xi) \cong H(\Lambda (x_2,\ldots, x_n, y_1,\ldots y_{n+r}), d) \, .
  \end{equation*}

  Now we apply lemma \ref{lem:semicontinuity2} to this to obtain that
   $$
   \dim H(\Lambda (x_2,\ldots, x_n,y_1,\ldots y_{n+r}), d) \leq \dim H(\Lambda W,d)=2\dim H(\Lambda V,d)\, .
   $$
  Repeating the argument $n$ times, we get that
     $$
   \dim H(\Lambda (y_1,\ldots y_{n+r}), d) \leq 2^n \dim H(\Lambda V,d)\, .
   $$
  But the hyperelliptic condition says that $d=0$ for the first space, so
   $$
   2^n\dim H(\Lambda V,d)\geq \dim H(\Lambda (y_1,\ldots y_{n+r}), d) =2^{n+r} \, .
   $$
   This gives
   \begin{equation}\label{eqn:2r}
   \dim H(\Lambda V,d)\geq 2^r \, .
   \end{equation}

\subsection{Another proof of (\ref{eqn:2r})}

In this paragraph we present a different proof of the inequality $\dim H(\Lambda V,d)\geq 2^r$
for hyperelliptic spaces. Recall that if $A$ is a commutative graded differential algebra, and
if $M,N$ are differential graded $A$-modules, the differential Tor is defined as:
$$
\Tor^\ast(M,N)=H^\ast(P\otimes_A N),
$$ where $P\stackrel{\sim}{\longrightarrow} M$ is a semifree resolution, i.e.,
a quasi-isomorphism from a semifree $A$-module $P$ to $M$ (see \cite[\textsection 6]{FHT}).

\begin{lemma} Let $C\stackrel{\varphi}{\longleftarrow}A\stackrel{\psi}{\longrightarrow}B$ be morphisms
of commutative differential graded algebras. There exists a convergent spectral sequence:
$$
E_2^{p,q}=H^p(B)\otimes \Tor^q_A(\QQ,C)\Rightarrow \Tor^{p+q}_A(B,C).
$$
\end{lemma}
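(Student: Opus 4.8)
The plan is to realize both the abutment and the $E_2$-term on a single filtered complex, built from a minimal semifree resolution of $C$ over $A$, with the filtration given by the internal degree of the ``$B$-factor''.

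First, using the augmentation $A\to\QQ$ (so $A$ is connected, and $\QQ$ is an $A$-module), I would fix a minimal semifree resolution $\rho\colon P\quism C$ of $C$ as a differential graded $A$-module, so that $P=A\ox Z$ as a graded $A$-module for some graded vector space $Z$, with $d(1\ox z)\in A^{+}\ox Z$ for every $z\in Z$. Recall that $\Tor_A(B,C)\iso H(B\ox_A P)$ for any such $P$ (differential $\Tor$ is symmetric, so the order of the arguments is immaterial), and similarly $\Tor_A(\QQ,C)\iso H(\QQ\ox_A P)$. Minimality forces the differential on $\QQ\ox_A P=P/A^{+}P$ to vanish, so $\Tor_A(\QQ,C)\iso\QQ\ox_A P\iso Z$ as graded vector spaces; likewise $B\ox_A P\iso B\ox Z$ as graded vector spaces, carrying a differential $D$ which sends $b\ox z$ to $d_Bb\ox z$ plus the terms obtained from $d(1\ox z)\in A^{+}\ox Z$ by pushing its $A$-coefficients into $B$ along $\psi$.

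Next I would filter $(B\ox Z,D)$ by the internal degree of the left factor, $F^{p}=B^{\geq p}\ox Z$. Since $d_B$ raises that degree by one and, by minimality, every $A$-coefficient occurring in $D$ lies in $A^{+}$ and so has positive degree, $D$ strictly increases the $F$-degree; hence the associated graded differential vanishes, $E_0=B\ox Z$ carries zero differential, and $E_1=B\ox\Tor_A(\QQ,C)$. The differential $d_1$ is the part of $D$ raising the $F$-degree by exactly one; since $A$ has no generators in degree $1$ (i.e. $A^{1}=0$, as holds in every application of this lemma below), we have $A^{+}=A^{\geq2}$, so the ``twisting'' terms coming from $d(1\ox z)$ raise the $F$-degree by at least two and contribute only to $d_2$ and higher, leaving $d_1=d_B\ox\id$. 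Therefore $E_2=H(B)\ox\Tor_A(\QQ,C)$, where $p$ records the internal $B$-degree and $q$ the $\Tor$-degree, so the abutment lies in total degree $p+q$. For convergence I would observe that $B$ and $\Tor_A(\QQ,C)$ are non-negatively graded, whence $F^{\bullet}$ is a finite filtration in each total degree; the spectral sequence then converges, and necessarily to $H(B\ox_A P)=\Tor_A(B,C)$. Naturality in $\varphi$ and $\psi$ is inherited from the functoriality of minimal semifree resolutions.

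The step I expect to demand the most care is the precise identification $d_1=d_B\ox\id$: one has to keep track of the two gradings carried by $P$ (the internal degree, used for the filtration, versus the resolution degree) and of the Koszul signs in $D$, and to verify that no component of $D$ dropping the resolution degree survives into $d_1$. This is also exactly where $A^{1}=0$ is genuinely needed; otherwise $d_1$ would pick up extra terms from the degree-one part of $A$ and $E_2$ would no longer split as a tensor product. A variant, at the cost of a larger complex, is to run the same filtration argument on the two-sided bar resolution of $C$ over $A$: there the order-zero part of $D$ is already the bar differential computing $\Tor_A(\QQ,C)$, which sidesteps minimality but still requires the same hypothesis on $A^{1}$.
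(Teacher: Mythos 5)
Your proof is correct, but it takes a genuinely different route from the paper's. The paper decomposes both maps as relative Sullivan algebras $A\to A\otimes\Lambda W\stackrel{\sim}{\longrightarrow}B$ and $A\to A\otimes\Lambda U\stackrel{\sim}{\longrightarrow}C$, identifies $\Tor_A(B,C)$ with $H(A\otimes\Lambda W\otimes\Lambda U)$, and then reads the statement off as the Serre spectral sequence of the rational fibration $A\otimes\Lambda W\to A\otimes\Lambda W\otimes\Lambda U\to \Lambda U$, using $H(A\otimes\Lambda W)=H(B)$ and $H(\Lambda U)=\Tor_A(\QQ,C)$. You instead resolve only $C$, by a \emph{minimal} semifree resolution $P=A\otimes Z$, invoke the symmetry of differential $\Tor$ to compute on $B\otimes_A P=B\otimes Z$, and build the filtration by $B$-degree by hand; minimality hands you $E_1=B\otimes\Tor_A(\QQ,C)$ on the nose (the fiber differential is literally zero), where the paper only reaches $H(\Lambda U)$ after turning one page of the Serre spectral sequence. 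What your version buys is a self-contained identification of the pages and an explicit account of where the hypothesis $A^1=0$ enters (to force $d_1=d_B\otimes\operatorname{id}$); what it costs is the appeal to existence and functoriality of minimal semifree resolutions and the added hypothesis $A^1=0$, which the lemma as stated does not impose. That restriction is harmless: the lemma is applied only with $A=\Lambda V^{even}$, generated in degrees $\geq 2$, and the paper's own use of an untwisted $E_2$-term for the Serre spectral sequence tacitly requires a comparable connectivity or nilpotence assumption. At bottom both arguments are the same filtration by the degree of the base, packaged differently.
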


\begin{proof}
Decompose $\varphi$ and $\psi$ as:
$$
\xymatrix
  {A\ar[dr]_{\psi}\ar[r] & A\otimes\Lambda W\ar[d]^{\sim}_{\alpha}&&A \ar[dr]_{\varphi}\ar[r] &A\otimes\Lambda U\ar[d]^{\sim}_{\beta}\\
   & B && &C}
  $$
Then $\alpha\colon A\otimes \Lambda W\stackrel{\sim}{\longrightarrow}B$ is a semifree resolution of $B$ regarded as $A$-module, so
$$
\Tor^*_A(B,C)=H^*((A\otimes\Lambda W)\otimes_AC).
$$

Moreover, $\text{Id}\otimes \beta\colon \
(A\otimes\Lambda W)\otimes_AA\otimes\Lambda U\stackrel{\sim}{\longrightarrow} (A\otimes\Lambda W)\otimes_AC$
is a quasi-isomorphism and
$(A\otimes\Lambda W)\otimes_A(A\otimes\Lambda U)\cong A\otimes\Lambda W\otimes\Lambda U.$ Therefore one gets a rational fibration
$$
A\otimes\Lambda W\to A\otimes\Lambda W\otimes\Lambda U\to \Lambda U,
$$
whose associated Serre spectral sequence has the form
$$
E_2^{p,q}=H^p(A\otimes\Lambda W)\otimes H^q(\Lambda U)\Rightarrow H^{p+q}(A\otimes\Lambda W\otimes\Lambda U).
$$

On the one hand, $H^*(A\otimes\Lambda W)=H^*(B).$ On the other hand, since $\beta$ is a semifree resolution of $C,$ we have that:
$$
H^*(\Lambda U)=H^*((A\otimes\Lambda U)\otimes_A\QQ)=\Tor^*_A(\QQ,C).
$$

Putting all pieces together we get
$$
E_2^{p,q}=H^p(B)\otimes \Tor^q_A(\QQ,C)\Rightarrow \Tor^{p+q}_A(B,C)
$$
\end{proof}

\begin{theorem} Let $(\Lambda V, d)$ be a hyperelliptic minimal model. Then
$$
\dim H(\Lambda V, d)\geq 2^r.
$$
\end{theorem}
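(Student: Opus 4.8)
The plan is to apply the Lemma above with $A=(\Lambda V^{even},0)$, which is a sub-CDGA of $(\Lambda V,d)$ precisely because the hyperelliptic hypothesis includes $d(V^{even})=0$. Take $B=(\Lambda V,d)$ together with the inclusion $A\inc B$, and $C=\QQ$ with the augmentation $A\surj\QQ$. The Lemma then produces a convergent spectral sequence
$$
E_2^{p,q}=H^p(\Lambda V,d)\ox\Tor^q_A(\QQ,\QQ)\ \Longrightarrow\ \Tor^{p+q}_A(\Lambda V,\QQ).
$$
Since the model is elliptic, $H^*(\Lambda V,d)$ is finite dimensional, and so are the two $\Tor$ terms computed below, so every page is finite dimensional. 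As the total dimension of the pages is non-increasing and $E_\infty$ is the associated graded of the (finite) filtration on the abutment, it suffices to compute the total dimensions $\dim\Tor^*_A(\QQ,\QQ)$ and $\dim\Tor^*_A(\Lambda V,\QQ)$ and compare them.

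For the first, the Koszul complex is a semifree resolution of $\QQ$ over the polynomial algebra $A=\QQ[x_1,\dots,x_n]$, so $\Tor^*_A(\QQ,\QQ)\cong\Lambda(u_1,\dots,u_n)$ with $\deg u_i=\deg x_i-1$, of total dimension $2^n$. For the second, I would use that $(\Lambda V,d)$ is semifree as a DG $A$-module: it is built from $A$ by adjoining the odd generators $y_1,\dots,y_{n+r}$ one at a time, and $dy_j=P_j(x_1,\dots,x_n,y_1,\dots,y_{j-1})$ lands in the sub-algebra generated by $A$ and the previous generators, so the filtration by the number of adjoined odd generators exhibits semifreeness (this is the only place the minimality of the model is used). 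Hence no further resolution is needed and
$$
\Tor^*_A(\Lambda V,\QQ)=H^*\big((\Lambda V,d)\ox_A\QQ\big)=H^*(\Lambda V^{odd},\bar d),
$$
where $\bar d$ is the differential induced on $\Lambda V^{odd}=\Lambda V/(x_1,\dots,x_n)$. The crucial point is that the hyperelliptic condition $d(V^{odd})\subset\Lambda^+V^{even}\ox\Lambda V^{odd}$ forces every $dy_j$ into the ideal $(x_1,\dots,x_n)$, so $\bar d=0$ and $\Tor^*_A(\Lambda V,\QQ)\cong\Lambda V^{odd}$, of total dimension $2^{n+r}$.

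Combining the two computations, the total dimension of $E_2$ equals $2^n\cdot\dim H(\Lambda V,d)$, while the abutment has total dimension $2^{n+r}$, whence
$$
2^n\cdot\dim H(\Lambda V,d)\ \geq\ 2^{n+r},
$$
that is, $\dim H(\Lambda V,d)\geq 2^r$. The genuinely delicate step is the second one: recognising $(\Lambda V,d)$ as a semifree $A$-module, so that its $\Tor$ against $\QQ$ is computed by a single tensor product, and verifying that the induced differential $\bar d$ vanishes — the latter is exactly where the full hyperelliptic hypothesis, rather than merely $d(V^{even})=0$, enters. Morally this is the same mechanism as in the first proof of (\ref{eqn:2r}): the factor $2^n$ obtained there through $n$ successive doublings appears here as $\dim\Tor^*_A(\QQ,\QQ)$.
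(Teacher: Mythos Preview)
Your proof is correct and follows essentially the same route as the paper: apply the spectral sequence lemma with $A=\Lambda V^{even}$, $B=\Lambda V$, $C=\QQ$, compute $\Tor^*_A(\QQ,\QQ)\cong\Lambda(u_1,\dots,u_n)$ via the Koszul resolution and $\Tor^*_A(\Lambda V,\QQ)\cong\Lambda V^{odd}$ using that $\Lambda V$ is $A$-semifree and the induced differential vanishes by the hyperelliptic condition, then read off $2^n\dim H(\Lambda V,d)\geq 2^{n+r}$. Your explicit justification of semifreeness and of $\bar d=0$ spells out points the paper leaves implicit, but the argument is the same.
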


\begin{proof}
Write as usual $x_1,\ldots,x_n$ for generators of $X=V^{even}$ and $y_1,\ldots,y_{n+r}$ for generators of $Y=V^{odd}$. When we apply the previous lemma to morphisms $\QQ{\longleftarrow}\Lambda X{\hookrightarrow}\Lambda V$ we get a spectral sequence:
$$
E_2=H(\Lambda V, d)\otimes \Tor^*_{\Lambda X}(\QQ,\QQ)\Rightarrow \Tor^*_{\Lambda X}(\Lambda V,\QQ).
$$

On the one hand,
$$
\Tor^*_{\Lambda X}(\QQ,\QQ)=H^*(\Lambda (\overline{x}_1,...,\overline{x}_n),0)=\Lambda (\overline{x}_1,\ldots,\overline{x}_n),
$$
where $\Lambda (x_1,...,x_n,\overline{x}_1,...,\overline{x}_n)\stackrel{\sim}{\longrightarrow} \QQ$ is a semifree resolution of $\QQ$ regarded as $\Lambda X$-module. Hence $\overline{x}_i$ are all of odd degree.

On the other hand, $\Lambda V$ is already $\Lambda X$-semifree, so:
$$
\Tor^*_{\Lambda X}(\Lambda V,\QQ)=H(\Lambda V\otimes_{\Lambda X}\QQ)=H^*(\Lambda (y_1,...,y_{n+k}),0)=\Lambda (y_1,...,y_{n+k}).
$$

Then the inequality
$$\dim H^*(\Lambda V, d)\cdot\dim \Tor^*_{\Lambda X}(\QQ,\QQ) \geq \dim \Tor^*_{\Lambda X}(\Lambda V,\QQ)$$
coming from the spectral sequence translates into
$$2^n\dim H^*(\Lambda V, d)\geq 2^{n+r},$$
so the result follows.
\end{proof}

\subsection{Proof of theorem \ref{thm:sec-proof2}}
   Now we prove the inequality $\dim H(\Lambda V,d)\geq 2n+r$, for the hyperelliptic
   minimal model.

   If $r=0$, then $\chi_\pi=0$. So \cite[Prop. 32.10]{FHT} says that the model is pure,
   and this case is already covered by remark \ref{rem:rem}.

   If $r>0$, then $\chi_\pi<0$. So by proposition \ref{prop:mod}, $\chi=0$, and hence it is enough
   to prove that
    $$
    \dim H^{even}(\Lambda V,d) \geq n+\tfrac{r}{2}.
    $$

   Suppose that  $r=1,2$. As the degree $0$ and degree $1$ elements give always non-trivial
   homology classes,
   then $\dim H^{even}(\Lambda V,d) \geq n+1$, and we are done.

   So we can assume $r\geq 3$. We use the following fact: if $P(x)$ is a quadratic
   polynomial on the $x$, and  $P(x) =d \alpha$, $\alpha \in \Lambda V$, then $\alpha$ must
   be linear, $\alpha\in V^{odd}$ and denoting by $d_o$ the composition
    $$
    V^{odd}\too  \Lambda^+ V^{even}\otimes \Lambda V^{odd} \surj \Lambda^+ V^{even}\, ,
    $$
   we have $P(x)=d_o \alpha$. So there are at least $\binom{n+1}{2} - (n+r)$ quadratic
   terms in the homology. Conjecture \ref{conj:Hilali} is proved if
    \begin{equation} \label{eqn:30}
    \left\{ \begin{array}{rl} \text{either} & 1+ n + \binom{n+1}{2} - (n+r) \geq n+\frac{r}2 \, , \\
    \text{or} & 2^r \geq 2n + r \, . \end{array}\right.
    \end{equation}

    So now assume that (\ref{eqn:30}) does not hold. Then
      \begin{equation}\label{eqn:3a}
      2^r -r \leq 2n-1\, ,
     \end{equation}
    and $1+ \binom{n+1}{2} - n <\frac32 r$, i.e.,
     \begin{equation}\label{eqn:3b}
     (2n-1)^2 \leq 12r-11.
     \end{equation}
  Putting together (\ref{eqn:3a}) and (\ref{eqn:3b}), we get $2^r-r \leq \sqrt{12r-11}$, i.e.,
   $2^r \leq r+ \sqrt{12r-11}$. This is easily seen to imply that $r\leq 3$. So $r=3$ and $n=3$.

   There remains to deal with the case $n=3$, $r=3$, and $d_o$ is an isomorphism of the
   odd degree elements onto $\Lambda^2 V^{even}$. Let $x_1,x_2,x_3$ be the even degree
   generators, of degrees $d_1\leq d_2\leq d_3$ respectively. The degrees of $x_1^2, x_1x_2, x_2^2 ,
   x_1x_3, x_2x_3,x_3^2$ are the six numbers
    $$
    2d_1\leq d_1+d_2 \leq 2d_2, \quad d_1+d_3 \leq d_2+d_3 \leq 2d_3.
    $$

We have two cases:
\begin{itemize}
 \item Case $2d_2\leq d_1+d_3$.
   We can arrange the odd generators $y_1,\ldots, y_6$ with  increasing degree and so that
   $d_oy_1=x_1^2, d_oy_2=x_1x_2, d_oy_3=x_2^2 ,
   d_oy_4=x_1x_3, d_oy_5=x_2x_3,d_oy_6=x_3^2$. Clearly, $d y_1=x_1^2$. Then $dy_2=x_1x_2 + P(x_1)$,
   where $P(x_1)$ is a polynomial on $x_1$, i.e., of the form $cx_1^n$, $n\geq 2$. But this can
   absorbed by a change of variables $y_2\mapsto y_2-cx_1^{n-2}y_1$. So we can write $dy_2=x_1x_2$.
   Now the even-degree closed elements in $\Lambda (x_1,x_2,x_3,y_1,y_2)$ are again polynomials on $x_1,x_2,x_3$.
   So we can assume $dy_3=x_2^2$ as before. Continuing the computation,
   the even-degree closed elements in $\Lambda (x_1,x_2,x_3,y_1,y_2,y_3)$ are either polynomials on the
    $x_i$'s or a multiple of the element
   $x_2^2y_1y_2- x_1x_2 y_1y_3 + x_1^2 y_2y_3= d(y_1y_2y_3)$, which is exact. So we can again manage to
   arrange that $dy_4=x_1x_3$.
\item Case $2d_2> d_1+d_3$. Then we have that $d_oy_3=x_1x_3$ and
   $d_oy_4=x_2^2$. As before, we can arrange $dy_3=x_1x_3$. Now the even-degree closed elements in
   $\Lambda (x_1,x_2,x_3,y_1,y_2,y_3)$
   are polynomials on the $x_i$'s or a multiple of
   $x_3y_1y_2 -x_2y_1y_3 + x_1y_2y_3$. But this element has degree $3d_1+d_2+d_3-2 > 2d_2$,
   so it must be $dy_4=x_2^2$.
\end{itemize}

  In either case, $dy_1,dy_2,dy_3,dy_4$ are $x_1^2,  x_1x_2, x_2^2$ and
   $x_1x_3$. Let us assume that we are in the first case to carry over the notation.

   Now we compute the even-degree closed elements in $\Lambda (x_1,x_2,x_3,y_1,y_2,y_3, y_4)$. These
are polinomials on $x_i$'s or combinations of
  \begin{align*}
    & x_2^2y_1y_2- x_1x_2 y_1y_3 + x_1^2 y_2y_3= d(y_1y_2y_3), \\
    & x_3y_1y_2 -x_2y_1y_4 + x_1y_2y_4, \\
   &x_1x_3y_2y_3-x_2^2y_2y_4+x_1x_2y_3y_4=d(y_2y_3y_4), \text{ and} \\
   & x_1x_3y_1y_3 +x_1^2y_3y_4 -x_2^2y_1y_4=d(y_1y_3y_4).
  \end{align*}
  Only the second one is non-exact, but its degree is strictly bigger thatn $d_2+d_3$.
  So again we can arrange that $dy_5=x_2x_3$.

   Finally, the minimal model is:
   $$
   \left\{ \begin{array}{l} dy_1=x_1^2, \\ dy_2=x_1x_2, \\ dy_3=x_2^2 , \\
   dy_4=x_1x_3,\\ dy_5=x_2x_3, \\ dy_6=x_3^2 +  P(x_i,y_j).
\end{array} \right.
   $$
   The even-degree closed elements in $\Lambda (x_1,x_2,x_3,y_1,y_2,y_3, y_4, y_5)$ contain at least
   \begin{eqnarray*}
     \alpha_1 &=&  x_3y_2y_3+x_1y_3y_5-x_2y_2y_5 \, ,\\
     \alpha_2 &=&  x_3y_1y_2 -x_2y_1y_4+ x_1y_2y_4\, .
   \end{eqnarray*}
   At most one of them does not survive in $H(\Lambda V,d)$,
so proving the existence of at least another even-degree cohomology class. Hence $\dim H(\Lambda V,d)\geq 10\geq 9$,
as required.

\end{document}